\newcommand{\cn}{\color{black}}
\newtheorem{theorem}{Theorem}[section]
\newtheorem{lemma}[theorem]{Lemma}
\newtheorem{remark}[theorem]{Remark}
\newenvironment{proof}[1][Proof]{\noindent \emph{#1.} }{\hfill \
\rule{0.5em}{0.5em}}
\makeatletter\@addtoreset{equation}{section}\makeatother
\makeatletter\@addtoreset{figure}{section}\makeatother
\makeatletter\@addtoreset{table}{section}\makeatother
\newcommand{\dotprod}[2]{\langle#1,#2\rangle} 					
\newcommand{\R}{{\mathbb{R}}}       							
\newcommand{\nell}{{n_{(\ell)}}}
\newcommand{\aplus}{a_{i+\frac{1}{2}}}
\newcommand{\aminus}{a_{i-\frac{1}{2}}}
\newcommand{\uu}{\mathbf{u}}
\newcommand{\y}{\mathbf{y}}
\newcommand{\Xo}{\mathbf{X}^{(0)}}
\begin{document}

\title{Tensor numerical method for optimal control problems constrained by an 
elliptic operator  with general  rank-structured  coefficients}
 
 \author{
        Boris N. Khoromskij\thanks{Max-Planck-Institute for
        Mathematics in the Sciences, Inselstr.~22-26, D-04103 Leipzig,
        Germany ({\tt bokh@mis.mpg.de}); Trier University, 
        FB IV - Department of Mathematics, D-54296, Trier, Germany.} 
         \and
         Britta Schmitt\thanks{Trier University,
        FB IV - Department of Mathematics, D-54296, Trier,
        Germany ({\tt schmittb@uni-trier.de}).}
        \and        
        Volker Schulz\thanks{Trier University,
        FB IV - Department of Mathematics, D-54296, Trier,
        Germany ({\tt volker.schulz@uni-trier.de}).}
        }


\maketitle
\vspace*{-6mm}
\begin{abstract}
We introduce tensor numerical techniques for solving optimal control problems  
constrained by elliptic operators in $\mathbb{R}^d$, $d=2,3$, with variable coefficients, 
which can be represented in a low rank separable form. 
We construct a preconditioned iterative method with an adaptive rank truncation 
for solving the equation for the control function, 
governed by a sum of the elliptic operator and its inverse $M=A + A^{-1}$, both discretized 
over large $n^{\otimes d}$, $d=2,3$, spatial grids. Two basic solution schemes are proposed and analyzed.
In the first approach, one solves iteratively the initial linear system of equations with the matrix $M$
such that the matrix vector multiplication with the elliptic operator inverse, $y=A^{-1} u,$
is performed as an embedded iteration by using a rank-structured solver for the 
equation of the form $A y=u$. The second numerical scheme avoids the embedded iteration by reducing the 
initial equation to an equivalent one with the polynomial system matrix of the form $A^2 +I$. 
For both schemes, a low Kronecker rank spectrally equivalent preconditioner 
is constructed by using the corresponding matrix valued function of the anisotropic Laplacian diagonalized in the 
Fourier basis.
Numerical tests for control problems in 2D setting confirm the linear-quadratic 
complexity scaling of the proposed method in the univariate grid size $n$.  Further, 
we numerically demonstrate that 
for our low rank solution method, a cascadic multigrid approach reduces the number of PCG iterations
considerably, however the total CPU time remains merely the same as for the unigrid iteration.  
\end{abstract}

\section{Introduction}\label{Int:SH} 

Optimization problems that are constrained by partial differential equations (PDEs) have a long 
  history in mathematical literature since they arise in a wide range of applications  in 
  different fields of natural science,  see  for example \cite{Troeltzsch:2005,Allaire:07,HerKun:2010} 
  for some comprehensive examples and related references. 
  Being studied for many years, 
  tracking-type problems which trace the discrepancy between the solution of the PDE and a given 
  target state represent a very important class of optimal control problems \cite{Reyes:2015}.
 Among others, a particular class of applications is related to structural topology optimization. 
  In such problems the discretization and numerical treatments of the elliptic PDE in constraints, 
  that determines the relation between the optimal design $y$ and control function $u$,
  \begin{equation} \label{eqn:Constraint eqn}
  {A} y:= \mbox{div} \, a(x) \mbox{grad} \, y= u, \quad x\in \mathbb{R}^d,
   \end{equation}
  can be performed by the traditional FEM methods dealing with sparse matrices. 
  In cases when the entire problem can be reduced to the explicit equation for the control function, the system 
  matrix includes the weighted sum of the initial operator $A$ and its nonlocal inverse 
  $A^{-1}$, $A + A^{-1}$. The numerical solution 
  of the resultant equation in multidimensional setting becomes the challenging problem due to 
  the treatment of the fully populated matrix $A^{-1}$.
  
   In this way, the multigrid methods for elliptic equations proved to be the efficient option 
   in the context of control problems since 
   they apply to the case of rather general equation coefficients, while the 
numerical complexity scales linearly in the number of grid points in the computational 
domain in $\mathbb{R}^d$, see \cite{BoSch:2009,BoSch:2012}. However, the multigrid approach
for equations with nonlocal operators of the form $A + A^{-1}$ does not provide the uniform 
convergence rate since both the smooth and highly oscillating eigen-functions correspond to large 
eigenvalues, $\lambda + \lambda^{-1}$. In this situation we propose to use a PCG iteration with a
 spectrally equivalent preconditioner of low Kronecker rank (K-rank) applied to the system matrix $A + A^{-1}$.

In the recent decade, tensor numerical methods proved to provide an efficient strategy 
for large scale computer simulations of PDE driven mathematical models \cite{Hack_Book12,KhorBook:18,Khor2Book2:18}. 
In particular, the basic numerical multilinear algebra algorithms for the Tucker, canonical, TT and
QTT tensors have been described in 
\cite{DMV-SIAM2:00,HaKhtens:04I,Hack_Book12,KhorBook:18} (see also references therein).
Iterative methods for solving linear systems in the rank-structured tensor formats have been 
considered in particular in \cite{KreTob:2010,DolOs:2011}.

Recently the numerical analysis of optimal control problems constrained by space-fractional 
elliptic operators was intensively discusses in the literature 
\cite{AntilOtarola:2015,HLMMV:2018,DPSS:2014,HKKS:18,SchmittKh2Sch:20,AntilDraGreen:2020,Bonito:18,Reyes:2015},
see also papers on the discretization and analysis of fractional PDEs \cite{DuLazPas:18,HarLazarov:20} 
and \cite{FracLapl:2018,Kwasn:17,Schwab:18}.

In the present paper 
we introduce tensor numerical techniques for solving optimal control problems  
constrained by elliptic operators in $\mathbb{R}^d$, $d=2,3$, with variable coefficients, 
which can be represented in low rank separable form. 
We construct a preconditioned iterative method with an adaptive rank truncation 
for solving the equation for the control function, 
governed by a sum of the elliptic operator and its inverse $M=A + A^{-1}$, both discretized 
over large $n^{\otimes d}$, $d=2,3$, spatial grids. 
The low-rank approximation of matrix-valued functions and the adaptive rank-truncation in PCG 
iteration for 3D case can be performed by the multigrid Tucker approximation scheme \cite{KhKh3:08}, while 
for 2D problems we use the standard reduced SVD.
Two basic solution schemes are proposed and analyzed.

In the first approach, one can solve iteratively the initial linear system of equations 
with the system matrix $M$
such that the matrix vector multiplication with the elliptic operator inverse, $y=A^{-1} u$,
is performed by solving the equation $A y=u$ via the embedded PCG iteration 
by using algorithms based on multilinear operations on rank-structured data. 
The second numerical scheme avoids the embedded iteration by transforming the initial 
equation to the equivalent one with a polynomial system matrix of the form {$AM=A^2 +I$}. 
For both approaches, a low Kronecker rank spectrally equivalent preconditioner 
is constructed by using the reciprocal matrix valued function of the anisotropic Laplacian diagonalized in the 
Fourier basis.  Throughout this work, we refer to the 
\textit{anisotropic Laplacian} whenever we use a scaled version of the classic Laplace  
operator $\Delta$ of the form $A= \sum_{\ell=1}^d a_\ell \frac{\partial^2}{\partial x_\ell^2}$, 
where the positive  constants $a_1,\ldots,a_d $ may vary on different scales. 
Numerical examples confirm the log-linear complexity scaling of the proposed method in the univariate grid size $n$.
 For a large univariate grid size $n$ the additional complexity term of the order of $O(n^2)$ can be observed, which is due to the cost 
of matrix-vector multiplication. 

 In order to numerically enhance the pcg iteration scheme, we make use of a cascadic multigrid approach, 
that means the iterative pcg method on the grid of size $n\times n$ starts with an interpolation of the solution from the 
previous grid $\frac{n}{2} \times \frac{n}{2}$ and no coarse grid correction is done, see \cite{Shaidur:95,BornDeuf:1996} for more details.

The rest of the paper is organized as follows. In Section \ref{sec:RankR_oper} we describe in detail
the low Kronecker rank discrete form of elliptic operators with  rank-$R$ separable coefficients
and then set up the equation for the control function. Section \ref{sec:MatrTensRepr} 
presents the construction and analysis of low rank spectrally equivalent preconditioners. 
In Section \ref{sec:numerics} we discuss the computational complexity. 
 In particular, we present numerical study of the presented tensor-based solver applied to control problems
constrained by the 2D elliptic equations with variable coefficients. 
Section \ref{sec:Conclusions} concludes the paper.
  
 \section{Low rank discrete form of elliptic operators with  rank-$R$ separable coefficients}
 \label{sec:RankR_oper}

 In the following, we introduce the target class of tracking-type optimal control problems 
 that motivates our work. 
 Subsequently, the discretization of the target elliptic operator and the effect of separable 
 variables on the structure 
 of the system matrix are presented. Finally, at the end of the chapter, the equation for the 
 problem-specific control is given and analyzed.

 Given the design function $y_\Omega \in L^2(\Omega)$ on $\Omega :=(0,1)^d$, $d=1,2,3$,
first, we consider the optimization problem for the cost functional
\begin{equation} \label{eqn:cost_func}
 \min_{y,u} J(y,u):=\frac{1}{2}\int_\Omega (y(x) -y_\Omega(x))^2\, dx + \frac{\gamma}{2} 
 \int_\Omega u^2(x) \,dx, \quad \gamma >0, 
\end{equation}
constrained by the elliptic boundary value problem  
in $\Omega$ for the state variable  $y \in H_0^1(\Omega)$,
\begin{equation} \label{eqn:basic_setting}
  {\cal A} y  := -\nabla^T \cdot \mathbb{A}(x)\nabla y =  u, \quad x\in \Omega,\;
  u\in L_2(\Omega),
  \end{equation}
endorsed with the homogeneous Dirichlet boundary conditions 
on $\Gamma = \partial \Omega $, i.e., $y_{|\Gamma}=0$.
 The diagonal $d\times d $ coefficient matrix takes the form 
$\mathbb{A}(x)= \mbox{diag}\{a(x),...,a(x)\}\in \mathbb{R}^{d\times d}$. 
 
 We consider optimal control problems with  an elliptic operator in the constraint
 equation (\ref{eqn:basic_setting})
 that has rank-$1$  separable variable coefficients in the form
\begin{equation} \label{eqn:Oper_Rank1}
 \mathcal{A} =- \sum_{\ell=1}^d \frac{d}{d x_\ell} 
\prod_{m=1}^d a_{m}(x_m) \frac{d}{d x_\ell} ,\quad x\in (0,1)^d,
\end{equation}
and the generalization to the rank-$R$ variable coefficients  leading to 
\begin{equation} \label{eqn:Oper_Rank1}
 \mathcal{A} =- \sum_{\ell=1}^d   \frac{d}{d x_\ell} 
 \left(\sum_{k=1}^R \prod_{m=1}^d a^{(k)}_{m}(x_m)\right)\frac{d}{d x_\ell} ,\quad x\in (0,1)^d.
\end{equation}

\subsection{Galerkin stiffness matrix in tensor product basis}\label{ssec:Galerkin} 

To enhance the application of the Kronecker product structures, 
we apply the FEM Galerkin scheme to the equation (\ref{eqn:Constraint eqn})
by means of the tensor-product piecewise linear finite elements 
$$
\{\psi_{{\bf i}}(x):=\psi_{i_1}(x_1) \cdots \psi_{i_d}(x_d)\}, 
\quad {{\bf i}}=(i_1,\ldots,i_d), \quad i_\ell\in {\mathcal I}_\ell=\{1,\ldots,n_\ell\}, 
$$
$\ell=1,\ldots,d$, where $\psi_{i_\ell}(x_\ell)$ are the univariate piecewise linear hat 
 functions\footnote{Notice that the univariate grid size $n_\ell$ 
designates the total problem size $ N_d = n_1 n_2\cdots n_d$.
}. 
The unknown optimal design and control functions  $y$ and $u$ 
in equation (\ref{eqn:Constraint eqn}) are represented by ${y(x)=\sum_i y_i \psi_{{\bf i}}(x)} $ and $u(x)=\sum_i u_i \psi_{{\bf i}}(x) $,
with the coefficient vector ${\bf y}=\{y_i \} \in \mathbb{R}^{N_d}$
and ${\bf u}=\{u_i \} \in \mathbb{R}^{N_d}$, respectively.

The $N_d\times N_d$ stiffness matrix in the FEM Galerkin discretization of 
the elliptic operator  (\ref{eqn:Oper_Rank1}) is constructed by the standard mapping of the multi-index $ {\bf i}$
into the long univariate index $1\leq i \leq N_d$ for the active degrees of freedom. 
For instance,  we use the so-called big-endian convention for $d=3$ and $d=2$
\[
 {{\bf i}}\mapsto i:= i_3 + (i_2-1)n_3 + (i_1-1)n_2 n_3, 
 \quad {{\bf i}}\mapsto i:= i_2 + (i_1-1)n_2,
\]
respectively. In what follows,  we consider the case $d=3$ in more detail. 

In our discretization scheme,  we calculate the stiffness matrix  
by assembling the Kronecker product terms by using representation of the equation
coefficient ${a}(x)$ as an $R$-term sum of separable functions
\begin{equation} \label{eqn:Coef_RankR}
 a(x_1,x_2,x_3) =  \sum\limits^{R}_{k=1} a^{(1)}_k (x_1) a^{(2)}_k (x_2) a^{(3)}_k (x_3), 
\end{equation}
where the functions of univariate arguments, $a^{(\ell)}_k (x_\ell)$, are supposed to be positive.

To that end, let us assume for the moment that the scalar diffusion coefficient $a(x_1,x_2,x_3) $ can be 
represented in the separate form (rank-$1$ representation)
\[
 a(x_1,x_2,x_3) = a^{(1)} (x_1) a^{(2)} (x_2) a^{(3)} (x_3) >0,
\]
so that the target elliptic operator takes form
\begin{align}\nonumber 
 {\cal A}= & -a_2(x_2) a_3(x_3)\frac{d}{d x_1}a_1(x_1)\frac{d}{d x_1} -  
   a_1(x_1) a_3(x_3)\frac{d}{d x_2}a_2(x_2) \frac{d}{d x_2} -  
   a_1(x_1) a_2(x_2)\frac{d}{d x_3}a_3(x_3)\frac{d}{d x_3}. 
 \end{align}
Then the entries of the Galerkin 
stiffness matrix $A=[a_{{\mu}{\nu}}]\in \mathbb{R}^{N_d\times N_d}$ can be represented by (denote $J=[0,1]$)
\begin{align} \label{eqn:tensStMatr}
a_{{\mu}{\nu}}   &=   \langle {\mathcal A} \psi_{\mu}, \psi_{\nu} \rangle =
\int_\Omega  a^{(1)} (x_1) a^{(2)} (x_2) a^{(3)} (x_3) \nabla \psi_\mu \cdot \nabla \psi_\nu  d x \nonumber \\
    & = \int_{J}  a^{(1)} (x_1) \dfrac{\partial \psi_{\mu_1} (x_1)}{\partial x_1 } 
    \dfrac{\partial \psi_{\nu_1} (x_1)}{\partial x_1 } d x_1   
   \int_{J} a^{(2)} (x_2) \psi_{\mu_2}(x_2)\psi_{\nu_2}(x_2) d x_2  
   \int_{J} a^{(3)} (x_3) \psi_{\mu_3}(x_3)\psi_{\nu_3}(x_3) d x_3 \nonumber\\
   & +  \int_{J} a^{(1)} (x_1) \psi_{\mu_1}(x_1) \psi_{\nu_1} (x_1)d x_1  
   \int_{J} a^{(2)} (x_2) \dfrac{\partial \psi_{\mu_2} (x_2)}{\partial x_2 } 
   \dfrac{\partial \psi_{\nu_2} (x_2)}{\partial x_2 } d x_2 
   \int_{J} a^{(3)} (x_3) \psi_{\mu_3}(x_3)\psi_{\nu_3}(x_3) d x_3 \nonumber\\
   & +  \int_{J} a^{(1)} (x_1) \psi_{\mu_1}(x_1) \psi_{\nu_1} (x_1)d x_1 
    \int_{J} a^{(2)} (x_2) \psi_{\mu_2}(x_2)\psi_{\nu_2}(x_2) d x_2
    \int_{J} a^{(3)} (x_3) \dfrac{\partial \psi_{\mu_3} (x_3)}{\partial x_3 } 
    \dfrac{\partial \psi_{\nu_3} (x_3)}{\partial x_3 } d x_3.
  \end{align} 
The discretized constraints equation (\ref{eqn:Constraint eqn}) then takes the form
\[
 A{\bf y}= M {\bf u}, \quad \mbox{with the mass matrix} \quad 
 M=[\langle \psi_{i}, \psi_{j} \rangle]_{i,j=1}^{N_d}\in \mathbb{R}^{N_d\times N_d}.
\]
This FEM discretization scheme leads to the low Kronecker rank structure in the stiffness matrix 
as described in the following.

 \subsection{Rank structured Kronecker form of the system matrix}\label{ssec:KronRepr}

The equation (\ref{eqn:tensStMatr}) ensures the rank-3 Kronecker product representation 
of the stiffness matrix
\[
 {A} = {A}_1 \otimes S_2\otimes S_3 + S_1 \otimes {A}_2 \otimes S_3 + S_1 \otimes S_2\otimes {A}_3 ,
\]
where $\otimes$ denotes the conventional Kronecker product of matrices. 

Here ${A}_1=[a_{\mu_1\nu_1}]\in \mathbb{R}^{n_1\times n_1} $,  and 
${A}_2=[a_{\mu_2 \nu_2}]\in \mathbb{R}^{n_2\times n_2}$  and ${A}_3=[a_{\mu_3 \nu_3}]\in \mathbb{R}^{n_3\times n_3}$
denote the univariate (tri-diagonal) stiffness matrices, while
 ${S_1=[s_{\mu_1\nu_1}]\in \mathbb{R}^{n_1\times n_1}}$, 
${S_2=[s_{\mu_2 \nu_2}]\in \mathbb{R}^{n_2\times n_2}}$ and  ${S_3=[s_{\mu_3 \nu_3}]\in \mathbb{R}^{n_3\times n_3}}$ 
define the weighted mass matrices, for example
{\small 
\[
 a_{\mu_1\nu_1}= 
 {\int_{(0,1)} } a^{(1)} (x_1) 
    \frac{\partial \psi_{\mu_1} (x_1)}{\partial x_1 }   
   \frac{\partial \psi_{\nu_1} (x_1)}{\partial x_1 } d x_1 , \quad
  s_{\mu_1\nu_1}= 
  \int_{(0,1)} a^{(1)} (x_1) \psi_{\mu_1}(x_1) \psi_{\nu_1} (x_1)d x_1 .
\]\
We use the integration scheme as in \cite{SchmittKh2Sch:20} to obtain the symmetric tridiagonal 
stiffness matrices $A_\ell$, $\ell=1,2,3$  with entries given by 

 $$ A_\ell = \dfrac{1}{h^2} \begin{bmatrix}
a_{11}^\ell & a_{12}^\ell & & & &\\
a_{21}^\ell & a_{22}^\ell & a_{23}^\ell & & & \\
 & &&\ddots&&\\
 &&&&\ddots&a_{(n-1),n}^\ell\\
&&& &a_{n,(n-1)}^\ell & a_{n,n}^\ell
\end{bmatrix}, \ 
 a_{i,j} = \begin{cases}
 \aplus & j-i = 1\\
 \aminus & j-i = -1\\
 -\aplus - \aminus & i = j,\quad i \not\in \{1, \nell\} \\
 -\aplus & i = j = 1\\
 -\aminus & i = j = \nell.
 \end{cases}$$

Without loss of approximation accuracy $O(h^2)$, we can apply the lumping procedure that substitutes 
the tri-diagonal mass matrices by diagonal ones, $S_\ell \mapsto D_\ell$, 
such that the entries of the diagonal matrix $D_\ell=\mbox{diag}\{d_1,\ldots,d_{n_\ell}\}$, 
$\ell=1,2,3$, are defined as the row sum of the elements in the initial mass matrix $S_\ell$,
\[
 d_i= s_{i,i-1} + s_{i,i} + s_{i,i+1}, \quad i=1,\ldots,n_\ell, \quad s_{1,0}=s_{n_\ell,n_\ell+1}=0.
\]
Then the corresponding stiffness matrix takes the form of Kronecker rank-$3$ representation,
\[
 A = A_1 \otimes D_2 \otimes D_3 +  D_1 \otimes A_2 \otimes D_3 + D_1 \otimes D_2 \otimes A_3,
\]
where $D_\ell$, $\ell=1,2,3$, denotes the diagonal matrices defined above.
In the case of rank-$R$ separable coefficients, the stiffness matrix obeys the form of Kronecker 
rank-$3R$ structure as following
\begin{equation} \label{eqn:3D_operator}
 A = \sum\limits^{R}_{k=1}
 \left( A_{1,k} \otimes D_{2,k} \otimes D_{3,k} +  D_{1,k} \otimes A_{2,k} \otimes D_{3,k} + 
 D_{1,k} \otimes D_{2,k} \otimes A_{3,k}\right).
\end{equation}

Now given the matrix $A$, in the case of space-fractional control, one might be interested in the 
fast rank-structured solution of an equation with a fractional elliptic operator of non-local type 
(arising in applications to optimal control problems)
\[
 (A^\alpha + A^{-\alpha}) u = F, \quad \alpha \in (0,1],
\]
in the spirit of the recent papers \cite{HKKS:18,SchmittKh2Sch:20}.
However, for the expected level of generality concerning the equation coefficients in (\ref{eqn:Coef_RankR}), 
we are not able to diagonalize the matrix $A$ 
in a separable tensor product eigen-basis of one-dimensional elliptic operators as it was  described
in \cite{HKKS:18,SchmittKh2Sch:20}. That is due to the fact that the diagonal matrices $D_\ell$ do not commute any more 
with the univariate stiffness matrices $A_\ell$.
On the other hand, the full-format representation of non-local operators $A^{\pm \alpha}$ 
becomes non-tractable for 3D discretizations on fine enough grids.
Due to these observations, in what follows, we consider the case $\alpha= 1$. 

The general model with $\alpha \in (0,1]$, i.e. for factional control, 
can be treated by the tensor based methods where the discretization scheme can be 
based on the contour integral representation, see for example 
\cite{GaHaKh3:02,HaHighTrefeth:08,Higham_MatrFunc:08}. This topic will be considered elsewhere. 

 In order to present an efficient tensor-vector multiplication $Ax$ where $A$ is given in Kronecker 
rank-$3R$ structure presented in \eqref{eqn:3D_operator}, let $x \in \R^N$ be a vector given 
in low-rank format, i.e. $$ x \approx \sum_{j = 1}^S x_1^{(j)} \otimes x_1^{(j)}\otimes x_1^{(j)}, $$ 
with vectors $x_\ell^{(j)} \in \R^n_\ell$ and $S \ll \min(n_1,n_2,n_3).$ Then, a tensor-vector product can be computed by 
{\small
\begin{align} \label{eqn:multischeme}
& \qquad  Ax \nonumber \\
&\approx   \left( \sum_{k = 1}^R  (A_1^{(k)} \otimes M_2^{(k)} \otimes M_3^{(k)} + M_1^{(k)} \otimes A_2^{(k)} \otimes M_3^{(k)} +  M_1^{(k)} \otimes M_2^{(k)} \otimes A_3^{(k)})\right) \left(\sum_{j = 1}^S u_1^{(j)} \otimes u_2^{(j)} \otimes u_3^{(j)}\right) \nonumber \\
\begin{split}
&=   \sum_{k = 1}^R \sum_{m = 1}^S ( A_1^{(k)}u_1^{(m)} \otimes M_2^{(k)}u_2^{(m)} \otimes M_3^{(k)}u_3^{(m)} + M_1^{(k)}u_1^{(m)} \otimes A_2^{(k)}u_2^{(m)} \otimes M_3^{(k)}u_3^{(m)}\\
& \hspace*{1.8cm}  + M_1^{(k)}u_1^{(m)} \otimes M_2^{(k)}u_2^{(m)} \otimes A_3^{(k)}u_3^{(m)}  ).
\end{split}
\end{align}
}
Aforementioned product \eqref{eqn:multischeme} can be calculated in factorized form in $O(d^2RSn^2)$ flops, where $n = \max(n_1,n_2,n_3).$

\subsection{Setting up the equation for control and analysis of the condition number}\label{ssec:KronRepres} 

We  describe necessary first order conditions by a modification of the 
constructions in \cite{HKKS:18}.
 In the following discussion, we use the notations $A_{FE}$ and $A_{FD}$ for the finite element and finite 
difference matrices, respectively, discretizing the elliptic operator $\mathcal{A}$. Here the finite element matrix $A=A_{FE}$ 
is constructed as described in \S \ref{ssec:KronRepr}, while the finite difference matrix  $A_{FD}$ is obtained from the latter by 
rescaling, see (\ref{eqn:FD_FEscaling}).

We consider a version of the control problem \eqref{eqn:cost_func} constrained by \eqref{eqn:basic_setting}, 
discretized  by a FE method  on a uniform grid in each dimension as described above,
\begin{align} \label{eqn:DiscrOptimal}
	\min_{\mathbf{y},\mathbf{u}} =  \ &\frac{1}{2}(\mathbf{y} - \mathbf{y}_\Omega)^TM(\mathbf{y} 
	- \mathbf{y}_\Omega)  + \frac{\gamma}{2} \mathbf{u}^T M \mathbf{u}\\
	\text{s.\,t.}\ ~ A_{FE} \mathbf{y} = \ &  M\mathbf{u},\label{eqn:DiscrOptimal2}
\end{align}
where the coefficient vectors $\y, \y_\Omega, \uu \in \R^N$ denote the discretized state $y$, design $y_\Omega$ 
and control $u$, respectively. 
 The matrix $M$ will be a mass matrix in the finite element case. 
   To derive the FD scheme, we simplify the mass matrix to $M= h^d I$ without loss of 
the asymptotic approximation rate.
Then we arrive at the following scaling relation 
\begin{equation}\label{eqn:FD_FEscaling}
A_{FD}= M^{-1}  A_{FE}= h^{-d} A_{FE} 
\end{equation}
 in the finite difference case. This translates the constraint equation (\ref{eqn:DiscrOptimal2}) to the 
form $A_{FD} \mathbf{y} =  \mathbf{u}$, while in the equation (\ref{eqn:DiscrOptimal}) the factor $M$ can 
be skipped  since the scaling by constant does not effect the minima of the functional.

Now setting up the Lagrangian function with the help of the discrete adjoint variable $\mathbf{p}$, 
\begin{equation*}
	L(\mathbf{y},\mathbf{u},\mathbf{p}) = \frac{1}{2}(\mathbf{y} - 
	\mathbf{y}_\Omega)^T (\mathbf{y} - \mathbf{y}_\Omega)  
	+ \frac{\gamma}{2} \mathbf{u}^T  \mathbf{u} + \mathbf{p}^T(A_{FD} \mathbf{y} -  \mathbf{u}),
\end{equation*}
and differentiating it with respect to all three variables, we end up with the system of equations 
\begin{equation*}
	\begin{bmatrix}
		E & O &  A_{FD}\\
		O & \gamma M & - E\\
		A_{FD} & - E & O
	\end{bmatrix}
	\begin{pmatrix}
		\mathbf{y}\\
		\mathbf{u}\\
		\mathbf{p}\\
	\end{pmatrix} =
	\begin{pmatrix}
		 \mathbf{y}_\varOmega\\
		\mathbf{0}\\
		\mathbf{0}\\
	\end{pmatrix} \
	\begin{matrix}
	\mathrm{(I)} \\
	\mathrm{(II)} \\
	\mathrm{(III)} \\
	\end{matrix}.
\end{equation*}
This system contains the necessary first order optimality conditions for 
the  minimizers  solving  the discretized  
optimal control problem (\ref{eqn:DiscrOptimal}),  (\ref{eqn:DiscrOptimal2}).

Then the state equation $\mathrm{(III)}$ can be solved for $\y$, yielding
\begin{equation}\label{eqn:State_FE}
	\mathbf{y} = A_{FD}^{-1}  \mathbf{u}.
\end{equation}
Subsequently to solving $\mathrm{(II)}$ for $\mathbf{p}$, the adjoint equation $\mathrm{(I)}$ 
eventually provides an equation for the control $\mathbf{u}$, that is
\begin{equation} \label{eqn:Lagrange_cont_FD}
\big( A_{FD}^{-1}   + \gamma  A_{FD} \big) \mathbf{u}  = \mathbf{y}_\varOmega.
\end{equation}

The straightforward preconditioned iteration for solution of the initial equation 
for the control function,
\begin{equation}\label{eqn:Control}
 (\gamma A + A^{-1}) u = F,
\end{equation}
needs the calculation of the matrix times vector multiplication $y_k= A^{-1} u_k$ for each current 
iterand $u_k$, which is equivalent to solving the linear system of equations $ A y_k =u_k$. 
The latter equation can be solved by 
the PCG iteration with rank truncation at the optimal complexity scaling of the order of $O(n \log^q n)$. 
However, the two-level embedded iteration might be too expensive in the case when the inner iteration 
requires a rather accurate solution. 

To avoid the embedded iteration, we propose to solve the equivalent modified equation
\begin{equation}\label{eqn:Control_mod}
 (A^2 +I ) u = A F.
\end{equation}
In this case we only need the matrix vector multiplication with the squared matrix $A^2$ and 
a spectrally equivalent preconditioner for $A^2 +I $.
The latter is constructed by using a low Kronecker rank  approximation of the inverse 
matrix $(\Delta^2 +I)^{-1} $ by using the factorization in the 3D 
Fourier basis
\[
 (\Delta^2 +I)^{-1}= \mathcal{F}^* ( \varLambda^2 +1)^{-1} \mathcal{F},
\]
which can be done by the similar techniques as in \cite{HKKS:18,SchmittKh2Sch:20}. 
From now on, $\Delta $ denotes the discrete Laplacian on a tensor grid
and $\varLambda$ is the diagonal matrix of the corresponding eigenvalues.

Finally, for a spacing $h \in (0,1)$ we notice that the condition numbers of both matrices $A + A^{-1}$ and $A^2 +I$
are of the order of $O(h^{-1})$ and $O(h^{-2})$, respectively, for the optimal scaling of the matrix $A$.
In the following Lemma we consider two different scalings for the target matrix $A$.
\begin{lemma}\label{lem:condMatr} 
 (A) Assume that $\sigma(A)\in [h,1/h]$, then the condition number estimates hold
 \[
  \mbox{cond}(A+A^{-1}) = O(h^{-1}),
 \]
 \[
  \mbox{cond}(A^2+ I) =O(h^{-2}).
 \]
(B) Under the assumption $\sigma(A)\in [1,1/h^2]$ we obtain
\[
  \mbox{cond}(A+A^{-1}) = O(h^{-2}),
 \]
 \[
  \mbox{cond}(A^2+ I) =O(h^{-4}).
 \]
\end{lemma}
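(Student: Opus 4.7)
The plan is to reduce everything to elementary analysis of two scalar functions on the given spectral intervals. Since the Galerkin stiffness matrix $A$ is symmetric positive definite (coming from a coercive elliptic operator with positive coefficients), we may apply the spectral mapping principle: for any real rational function $f$ that is finite on $\sigma(A)$, the matrix $f(A)$ is symmetric with spectrum $f(\sigma(A))$, and consequently
\[
\mathrm{cond}(f(A)) \;=\; \frac{\max_{\lambda \in \sigma(A)} |f(\lambda)|}{\min_{\lambda \in \sigma(A)} |f(\lambda)|}.
\]
Thus both parts of the lemma reduce to determining the extrema of $f_1(\lambda) = \lambda + \lambda^{-1}$ and $f_2(\lambda)=\lambda^2+1$ on the intervals $[h,1/h]$ and $[1,1/h^2]$ respectively.

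For part (A), with $\sigma(A) \subset [h,1/h]$, I would observe that $f_1'(\lambda)=1-\lambda^{-2}$ vanishes at $\lambda=1$, which lies inside $[h,1/h]$ for $h\in(0,1)$. Hence $\min f_1 = f_1(1)=2$, whereas by symmetry $f_1(h)=f_1(1/h)=h+1/h$, yielding $\max f_1 = h^{-1}+h$. This gives $\mathrm{cond}(A+A^{-1}) = (h^{-1}+h)/2 = O(h^{-1})$. For $f_2$, which is strictly increasing on $(0,\infty)$, I read off $\min f_2 = h^2+1$ and $\max f_2 = h^{-2}+1$, so $\mathrm{cond}(A^2+I) = (h^{-2}+1)/(h^2+1) = O(h^{-2})$.

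For part (B), with $\sigma(A) \subset [1,1/h^2]$, the key point is that $f_1$ is now monotonically increasing on the entire interval, since $f_1'(\lambda)\ge 0$ for $\lambda\ge 1$. Therefore $\min f_1 = f_1(1)=2$ and $\max f_1 = h^{-2}+h^2$, giving $\mathrm{cond}(A+A^{-1}) = O(h^{-2})$. Similarly $\min f_2 = 2$ and $\max f_2 = h^{-4}+1$, so $\mathrm{cond}(A^2+I) = O(h^{-4})$.

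The main (and essentially only) conceptual point is the location of the minimum of $f_1$: in regime (A) the interval straddles $\lambda=1$, so the lower bound on the spectrum of $A+A^{-1}$ is an absolute constant independent of $h$, producing one factor of $h^{-1}$ less than one might naively expect; in regime (B) the interval lies entirely to the right of $\lambda=1$, so $f_1$ behaves like the identity at the upper endpoint and like a constant at the lower endpoint, and the full $h^{-2}$ growth is transmitted to the condition number. No further obstacles are anticipated, as the SPD spectral mapping is standard and the arithmetic is routine.
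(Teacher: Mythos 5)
Your proposal is correct and follows essentially the same route as the paper: the authors likewise reduce the estimates to the extrema of the spectral functions $F_1(\lambda)=\lambda+\lambda^{-1}$ and $F_2(\lambda)=\lambda^2+1$ on the given spectral intervals, noting in case (A) that $\min F_1 = 2$ at $\lambda=1$ while the maximum $O(h^{-1})$ is attained at both endpoints. Your write-up merely makes explicit (via monotonicity of $F_1$ on $[1,\infty)$ and of $F_2$ on $(0,\infty)$) the ``similar arguments'' the paper leaves to the reader.
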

\begin{proof}
We consider the upper and lower bounds of the respective spectral functions
 \[
  F_1(\lambda)= \lambda + \lambda^{-1}\quad \mbox{and}\quad F_2(\lambda)= \lambda^2 +1, \quad \lambda\in \sigma(A).
 \]
 For example, consider the function $F_1(\lambda)$ in case (A). In this scenario it holds  $\mbox{min}(F_1(\lambda))=2$, while 
 the maximal value, $O(h^{-1})$, is achieved on both endpoints of the spectrum $\sigma(A)$. Applying the similar arguments to 
 other cases completes the proof.
\end{proof}

In view of Lemma \ref{lem:condMatr}, a spectrally close preconditioner is mandatory for the efficient solution 
of both equations (\ref{eqn:Control}) and (\ref{eqn:Control_mod}).
In the following section, we construct and analyze the low $K$-rank
spectrally close preconditioners for the system matrices $A + A^{-1}$ and $A^2 +I$.

\section{Preconditioning issues}\label{sec:MatrTensRepr} 

\subsection{Rank-structured spectrally close preconditioners: condition number estimates}\label{ssec:Precond} 

We are interested in the construction of spectrally equivalent preconditioners for the matrices 
 $B=A + A^{-1}$ and $S= A^2 +I$  in 
(\ref{eqn:Control}) and (\ref{eqn:Control_mod}) in the form of low $K$-rank matrices. For both cases
the resultant preconditioner is based on the simple spectrally close preconditioners constructed for 
the initial stiffness matrix $A$.

\subsubsection{Preconditioning the stiffness matrix $A$ }

For the ease of presentation, first, we describe the construction of preconditioners for the two dimensional case, 
such that the target stiffness matrix is presented in the rank structured form
\begin{equation} \label{eqn:2D_operator}
 A = \sum\limits^{R}_{k=1}
 \left( A_{1,k} \otimes D_{2,k}  +  D_{1,k} \otimes A_{2,k}\right), 
\end{equation}
where  $D_{\ell,k}=\mbox{diag} \{ {\bf d}_{\ell,k}\} >0$ are positive diagonal matrices and the tridiagonal matrices 
$A_{\ell,k}$ are each spectrally equivalent to the 1D discrete Laplacian $\Delta$ with 
equivalence constants $a_{\ell,k}^{\pm}>0$, $\ell=1,2$,  that is it holds 
$$ a_{\ell,k}^{-} \Delta \le A_{\ell,k} \le a_{\ell,k}^{+} \Delta, \quad \ell = 1,2, \quad k = 1,\dots, R. $$ The constants $a_{\ell,k}^{\pm}>0$, $\ell=1,2$, $k = 1,\dots, R$ can be assumed to be a majorant and minorant to the respective coefficients $a_\ell^k(x_\ell)$, $\ell=1,2$, $k = 1,\dots, R$ as part of \eqref{eqn:Oper_Rank1}.

For preconditioning needs,
we simplify the diagonal matrices by weighted identity matrices, 
where positive weights $d_{\ell,k}^0$ are calculated as the average values of the corresponding 
diagonal vectors ${\bf d}_{\ell,k}=\mbox{diag}\{D_{\ell,k}\}\in \mathbb{R}^n$,
\begin{equation}\label{eqn:weights}
d_{\ell,k}^0=n^{-1}\langle {\bf d}_{\ell,k},1 \rangle, 
\end{equation}
and introduce the constants 
\[
a_{\ell,k}^{0} =\frac{1}{2}(a_{\ell,k}^{+} + a_{\ell,k}^{-}) > 0,\quad k=1,\ldots, R.
\] 
Now agglomerating the anisotropy factors in both dimensions
\begin{equation}\label{eqn:ani_factors}
a_{\ell}^{0} = \sum\limits^{R}_{k=1}a_{\ell,k}^{0} \prod\limits_{m = 1; m \neq \ell}^d d_{m,k}^0, \quad \ell=1,2, 
 \end{equation}
we construct the simple preconditioner $B_1^{-1}$ with $B_1 = A_1 + A_1^{-1}$ by using the anisotropic Laplacian
\begin{equation}\label{eqn:PrecA1_2D}
 A_1= a_{1}^{0} \Delta_1 \otimes I_2 + a_{2}^{0} I_1 \otimes \Delta_2,
 \end{equation}
 where  $\Delta_1 = \Delta_2 = \Delta$, 
so that the storage and matrix-vector multiplication with preconditioner $B_1^{-1}$ can be 
implemented in $O(n \log n)$ operations by low $K$-rank tensor decomposition of $B_1^{-1}$ in 
the Fourier basis, see \cite{SchmittKh2Sch:20} for details.  Likewise, the matrix $A_1$ can be also used for the construction of 
preconditioners $S_1^{-1}$ with $S_1=A_1^2 +I$ applied to the modified system matrix $S$. 

The more advanced preconditioner $B_2^{-1}$ with $B_2 = A_2 + A_2^{-1}$, as well as $S_2^{-1}= (A_2^2+I)^{-1}$,
based on the elliptic operator inverse with variable coefficients, see \cite{SchmittKh2Sch:20}, are both constructed by
using the matrix
\begin{equation}\label{eqn:PrecA2_2D}
 A_2= A_1^0 \otimes I_2 + I_1 \otimes A_{2}^{0},
 \end{equation}
where
\[
 A_1^0 = \sum\limits^{R}_{k=1} d_{2,k}^0 A_{1,k}, \quad
 A_2^0 = \sum\limits^{R}_{k=1} d_{1,k}^0 A_{2,k}.
\]
The matrix in (\ref{eqn:PrecA2_2D}) can be applied to enhance the convergence of PCG iteration in the situation with highly variable coefficients. 
In this case, following \cite{SchmittKh2Sch:20},
we represent the low $K$-rank tensor decomposition of $B_2^{-1}$
by using the factorization of $B_2$ in the eigenbasis of the one-dimensional elliptic operators $A_1^0$ and $A_2^0$.
The asymptotic complexity estimate of the preconditioner includes the term of the order 
of $C_0 \, n^2$ with a small constant $C_0$ in front of, which, does not fit (formally) the concept of 
$O(n \log n)$-complexity  tensor based solver.
However, in numerical tests we observe that the expense of matrix-vector multiplication 
operation of the complexity $n^2$ remains negligible 
for moderate grid-size $n$ and becomes noticeable only for the very large $n$.

The 3D analogies of both preconditioners 
 $B_1= A_1 + A_1^{-1}$ and $B_2= A_2 + A_2^{-1}$ as well as $S_1=A_1^2 +I$ and $S_2= A_2^2 +I$  
are based  on the following constructions of matrices $A_1$ and $A_2$. 
Given the stiffness matrix $A$ in (\ref{eqn:3D_operator}),
the  representations for both   $B_1$ and 
$B_2 $,  as well as for $S_1$ and $S_2$,  follow from the explicit expansions for the spectrally close to $A$ matrices 
$A_1$, and $A_2$, defined by
\begin{equation} \label{eqn:PrecA1_3D}
 A_1= a_{1}^{0} \Delta_1 \otimes I_2 \otimes I_3 + a_{2}^{0} I_1 \otimes \Delta_2 \otimes I_3 + 
 a_{3}^{0} I_1 \otimes I_2 \otimes \Delta_3, 
\end{equation}
with the scaling constants, $a_{\ell}^{0} = \sum\limits^{R}_{k=1}a_{\ell,k}^{0} \prod\limits_{m=1; m\neq \ell}^d d_{m,k}^0 \ , 
\; \ell=1,2,3$, and 
\begin{equation} \label{eqn:PrecA2_3D}
 A_2= A_1^0 \otimes I_2\otimes I_3 + I_1 \otimes A_{2}^{0}\otimes I_3 +
 I_1 \otimes I_2 \otimes A_{3}^{0},
\end{equation}
where the matrices $A_\ell^0\in \mathbb{R}^{n\times n}$ are defined by
\[
 A_1^0 = \sum\limits^{R}_{k=1} d_{2,k}^0 d_{3,k}^0 A_{1,k}, \quad
 A_2^0 = \sum\limits^{R}_{k=1} d_{1,k}^0 d_{3,k}^0 A_{2,k}, \quad 
 A_3^0 = \sum\limits^{R}_{k=1} d_{1,k}^0 d_{2,k}^0 A_{3,k},
\]
respectively. 
 In 3D case the asymptotic complexity of preconditioner $B_2$ is close to $O(d \, n \log \, n)$
since the ``complexity term'' $C_1\, n^2$ appears with a small constant
and remains negligible in the practically interesting range of the univariate grid-size parameter $n$.

\begin{lemma}\label{lem:prec_4_A}
 Let the matrices $A_1$ and $A_2$ be given by (\ref{eqn:PrecA1_3D}) and (\ref{eqn:PrecA2_3D}), 
 respectively, and the diagonal elements in (\ref{eqn:weights}) satisfy
 \[
   0 < d^{-}_{\ell,k} \leq {\bf d}_{\ell,k}[i]\leq d^{+}_{\ell,k}, \quad i=1,\ldots,n.
 \]
 Then the following spectral equivalence estimates hold
 \begin{equation} \label{eqn:PrecA1_3D_bounds}
   (1-q_A)(1-q_D)^{d-1}  A_1  \leq A  \leq (1+q_A)(1+q_D)^{d-1} A_1,
 \end{equation}
 where 
\[
q_A=\max\limits_
{\ell,k}\frac{a^{+}_{\ell,k}-a^{-}_{\ell,k}}{a^{+}_{\ell,k}+a^{-}_{\ell,k}} <1,
\]
does not depend on $n$. Furthermore, we have
\begin{equation} \label{eqn:PrecA2_3D_bounds}
    (1-q_D)^{d-1} A_2 \leq A  \leq (1+q_D)^{d-1} A_2,
\end{equation}
where 
\[
q_D=\max\limits_
{\ell,k}\frac{d^{+}_{\ell,k}-d^{-}_{\ell,k}}{d^{+}_{\ell,k}+
d^{-}_{\ell,k}} <1
\]
does not depend on $n$.
\end{lemma}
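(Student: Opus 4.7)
The plan is to push scalar spectral bounds on each local factor through the Kronecker-product structure of $A$ summand by summand and then reassemble. To set things up, I would first rewrite the hypothesized bounds in multiplicative-deviation form: for any $a^+ \ge a^- > 0$ with mean $a^0 = \tfrac12(a^+ + a^-)$ and anisotropy ratio $q = (a^+ - a^-)/(a^+ + a^-) < 1$, one has $a^- = (1-q) a^0$ and $a^+ = (1+q) a^0$. Taking $q_A$ and $q_D$ as the maxima of the corresponding ratios over all $(\ell,k)$ gives the uniform Löwner estimates
\begin{equation*}
(1-q_A)\, a_{\ell,k}^0\, \Delta \;\le\; A_{\ell,k} \;\le\; (1+q_A)\, a_{\ell,k}^0\, \Delta,
\qquad
(1-q_D)\, d_{\ell,k}^0\, I \;\le\; D_{\ell,k} \;\le\; (1+q_D)\, d_{\ell,k}^0\, I,
\end{equation*}
with all factors strictly positive by hypothesis.

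The workhorse is the Löwner monotonicity of the Kronecker product on positive semidefinite matrices: if $0 \le X_i \le Y_i$ for $i = 1,\ldots,d$, then $X_1 \otimes \cdots \otimes X_d \le Y_1 \otimes \cdots \otimes Y_d$. I would state this explicitly and justify it by induction from the two-factor identity $Y_1 \otimes Y_2 - X_1 \otimes X_2 = (Y_1 - X_1) \otimes Y_2 + X_1 \otimes (Y_2 - X_2)$, whose right-hand side is a sum of Kronecker products of positive semidefinite matrices and is therefore positive semidefinite. Applied to each Kronecker summand of $A$, this yields, for example,
\begin{equation*}
A_{1,k} \otimes D_{2,k} \otimes D_{3,k} \;\le\; (1+q_A)(1+q_D)^{d-1}\, a_{1,k}^0 d_{2,k}^0 d_{3,k}^0\, \bigl(\Delta \otimes I \otimes I\bigr),
\end{equation*}
and the analogous estimates for the other two directional families. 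Summing over $k$ and over the three directions, the agglomerated constants $a_\ell^0 = \sum_{k} a_{\ell,k}^0 \prod_{m\ne\ell} d_{m,k}^0$ collect exactly into the definition \eqref{eqn:PrecA1_3D} of $A_1$, delivering $A \le (1+q_A)(1+q_D)^{d-1} A_1$. The matching lower bound in \eqref{eqn:PrecA1_3D_bounds} is obtained by reversing the inequalities throughout.

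For the preconditioner $A_2$ I would repeat the same argument but leave the stiffness factors $A_{\ell,k}$ untouched, rather than sandwiching them by $(1 \pm q_A) a_{\ell,k}^0 \Delta$. Only the diagonal factors get replaced by their scalar bounds, so each summand carries the prefactor $(1 \pm q_D)^{d-1}$; the summation over $k$ then produces the matrices $A_\ell^0 = \sum_k \prod_{m\ne\ell} d_{m,k}^0 A_{\ell,k}$ by definition, yielding \eqref{eqn:PrecA2_3D_bounds} without any $q_A$-factor.

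The only non-routine ingredient is the Löwner monotonicity of the Kronecker product, which is standard but worth stating and proving since it is used throughout; the rest of the proof is bookkeeping of the constants in the definitions of $a_\ell^0$ and $A_\ell^0$. I do not anticipate a serious obstacle beyond checking that, at each replacement of a factor by its majorant or minorant, the other factors in the same Kronecker summand remain positive semidefinite so that the monotonicity applies.
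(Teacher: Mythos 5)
Your proposal is correct and follows essentially the same route as the paper: sandwich each diagonal factor $D_{\ell,k}$ by $(1\pm q_D)d_{\ell,k}^0 I$ (and, for $A_1$, each $A_{\ell,k}$ by $(1\pm q_A)a_{\ell,k}^0\Delta$) summand by summand in the Kronecker representation of $A$, then sum over $k$ and the $d$ directions so the constants agglomerate into the definitions of $A_1$ and $A_2$. The only difference is that you state and prove the L\"owner monotonicity of the Kronecker product explicitly, which the paper uses implicitly; this is a welcome (and harmless) addition.
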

\begin{proof}
 We split our proof into two arguments. To that end, we represent the averaged matrix 
 $A_2$ in the equivalent form
\begin{equation} \label{eqn:3D_oper_average}
A_2 = \sum\limits^{R}_{k=1}  \left(A_{1,k} \otimes  d_{2,k}^0 I_{2} \otimes d_{3,k}^0 I_{3}+  
d_{1,k}^0 I_{1} \otimes A_{2,k} \otimes d_{3,k}^0 I_{3} + 
d_{1,k}^0 I_{1} \otimes d_{2,k}^0 I_{2} \otimes A_{3,k} \right),
\end{equation}
where the constants $d_{\ell,k}^0$ denote the mean-value of the corresponding diagonals $D_{\ell,k}$, 
see (\ref{eqn:weights}). Now we apply the simple spectral bounds
\[
  (1-q_D) d_{\ell,k}^0 I_{\ell}  \leq D_{\ell,k} \leq (1+q_D) d_{\ell,k}^0 I_{\ell},\quad \ell=1,2,3,
  \quad k=1,\ldots,R,
\]
to obtain (for $\ell=1$)
\[
(1-q_D)^{d-1}A_{1,k} \otimes  d_{2,k}I_{2} \otimes d_{3,k}I_{3}
     \leq A_{1,k} \otimes  D_{2,k} \otimes D_{3,k} \leq 
     (1+q_D)^{d-1}A_{1,k} \otimes  d_{2,k}I_{2} \otimes d_{3,k}I_{3},
\]
and similar for $\ell=2,3$. Summing up the above bounds over $\ell=1,2,3$
and $k=1,\ldots,R$, we arrive at the claim (\ref{eqn:PrecA2_3D_bounds}) for $A_2$.

To justify the estimate (\ref{eqn:PrecA1_3D_bounds}) we combine the previous argument 
with the corresponding spectral bounds for the tridiagonal matrices 
$A_{\ell,k}$ in terms of the constant $q_A$,
\[
 (1-q_A)\Delta  \leq A_{\ell,k}\leq (1+q_A)\Delta, \quad \ell=1,2,3; \quad k=1,
 \ldots,R,
\]
which completes the proof.
\end{proof}

 Following the arguments similar to those in \cite{SchmittKh2Sch:20},
the condition number for both preconditioners $B_1^{-1}$ and $B_2^{-1}$,  as well as for $S_1^{-1}$ and $S_2^{-1}$,
applied to the system matrix $B$ and $S$, respectively, 
can be estimated in terms of given quantities  
$d_{\ell,k}^0$ and $a_{\ell,k}^{0}$ and the corresponding equivalence constants for each of 
$R$-terms in (\ref{eqn:3D_operator}). This issue is considered in the following sections.

\subsubsection{Analysis of preconditioners for the system matrix $\mathbf{B= A +A^{-1}}$ }

Taking into account Lemma \ref{lem:prec_4_A} the condition number for the preconditioned 
matrix $B= A +A^{-1} $ can be estimated as follows
\begin{theorem}\label{thm:cond-A1pAm1}
Let the preconditioners be given by $$B_1=A_1+A_1^{-1} \qquad \text{and} \qquad B_2=A_2+A_2^{-1},
$$ 
where the generating matrices $A_1$ and $A_2$ are defined by 
(\ref{eqn:PrecA1_3D}) and (\ref{eqn:PrecA2_3D}), respectively.
Then under the above assumptions on the equation coefficients the following spectral bounds hold
\begin{equation} \label{eqn:B2_for_caseI}
 \mbox{cond}(B_2^{-1} B)  \leq 
 \frac{\max\{(1+q_D)^{d-1},(1-q_D)^{1-d}\}}{\min\{(1+q_D)^{1-d},(1-q_D)^{d-1}\}}\equiv Q^{d-1}, 
\end{equation}
and
\begin{equation} \label{eqn:B1_for_caseI}
 \mbox{cond}(B_1^{-1} B) \leq 
 \frac{\max\{(1+q_A)(1+q_D)^{d-1},(1-q_A)^{-1} (1-q_D)^{1-d}\}}
 {\min\{(1+q_A)^{-1}(1+q_D)^{1-d},(1-q_A)(1-q_D)^{d-1}\}},
\end{equation}
for $d=2,3,$ uniformly in $n$.
\end{theorem}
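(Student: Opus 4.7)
The plan is to bootstrap the spectral equivalence estimates for $A$ versus the preconditioner matrices $A_1,A_2$ provided by Lemma \ref{lem:prec_4_A} into spectral equivalence estimates for $B=A+A^{-1}$ versus $B_j=A_j+A_j^{-1}$, $j=1,2$, and then to read off the condition number bounds as the ratio of the upper and lower equivalence constants.

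First I would handle the simpler case $B_2^{-1}B$. From Lemma \ref{lem:prec_4_A} we have $\alpha A_2\le A\le \beta A_2$ with $\alpha=(1-q_D)^{d-1}$ and $\beta=(1+q_D)^{d-1}$. Since $A,A_2$ are symmetric positive definite and matrix inversion is operator-antimonotone on SPD matrices, this yields $\beta^{-1}A_2^{-1}\le A^{-1}\le \alpha^{-1}A_2^{-1}$. Adding the two two-sided bounds gives
\begin{equation*}
\alpha A_2+\beta^{-1}A_2^{-1}\;\le\; B\;\le\;\beta A_2+\alpha^{-1}A_2^{-1}.
\end{equation*}
Since $A_2$ and $A_2^{-1}$ are individually positive, the lower bound is at least $\min(\alpha,\beta^{-1})(A_2+A_2^{-1})=\min(\alpha,\beta^{-1})\,B_2$ and the upper bound is at most $\max(\beta,\alpha^{-1})\,B_2$. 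Taking the quotient of the resulting spectral constants yields (\ref{eqn:B2_for_caseI}); observing that $(1-q_D)(1+q_D)<1$ confirms the labelling $Q^{d-1}$ in the statement.

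For the estimate (\ref{eqn:B1_for_caseI}) on $B_1^{-1}B$ I would repeat the same argument, simply with the sharper constants $\alpha=(1-q_A)(1-q_D)^{d-1}$ and $\beta=(1+q_A)(1+q_D)^{d-1}$ taken from the other half of Lemma \ref{lem:prec_4_A}. The same antimonotonicity-and-sum trick gives $\min(\alpha,\beta^{-1})B_1\le B\le \max(\beta,\alpha^{-1})B_1$, whose ratio is precisely the right-hand side of (\ref{eqn:B1_for_caseI}). Independence from $n$ is inherited from Lemma \ref{lem:prec_4_A}, since $q_A,q_D$ only depend on the coefficient data.

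The one conceptual point that needs care — and which I view as the main (mild) obstacle — is that the scalar map $\lambda\mapsto \lambda+\lambda^{-1}$ is \emph{not} monotone, so a spectral equivalence $\alpha A_j\le A\le\beta A_j$ cannot simply be pushed through this function. The remedy is the one above: treat $A$ and $A^{-1}$ as two separate positive terms, use the operator antimonotonicity of inversion for the second, and then lower/upper bound each summand by $B_j$ with the constants $\min(\alpha,\beta^{-1})$ and $\max(\beta,\alpha^{-1})$ respectively. Provided $\alpha\le 1\le\beta$ (which holds since Lemma \ref{lem:prec_4_A} gives equivalence constants straddling $1$), this recovers the worst-case ratio appearing in the statement. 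The remaining work is purely algebraic simplification.
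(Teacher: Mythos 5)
Your proposal is correct and follows essentially the same route as the paper's own proof: invert the two-sided bounds of Lemma \ref{lem:prec_4_A} (antimonotonicity of inversion on SPD matrices) to control $A^{-1}$ by $A_j^{-1}$, add the two estimates, and absorb the constants into $\min(\alpha,\beta^{-1})B_j\le B\le\max(\beta,\alpha^{-1})B_j$. Your write-up merely makes explicit the splitting of $B$ into the two positive summands, a step the paper leaves implicit.
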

\begin{proof}
Based on Lemma \ref{lem:prec_4_A} we derive the spectral bounds
\[
      (1+q_D)^{1-d}  A_2   \leq A^{-1}\leq (1-q_D)^{1-d}  A_2,
\]
and 
\[
   (1+q_A)^{-1}(1+q_D)^{1-d}  A_1   \leq A^{-1}\leq (1-q_A)^{-1}(1-q_D)^{1-d}  A_1,
\]
then the results follow by combining the above estimates with (\ref{eqn:PrecA2_3D_bounds}) and 
(\ref{eqn:PrecA1_3D_bounds}), respectively.
\end{proof}

\subsubsection{Analysis of preconditioners for the system matrix $\mathbf{S= A^2 +I} $ }

In view of Lemma \ref{lem:prec_4_A}  the analysis of preconditioner for the modified matrix  $S=A^2+I$ 
can be done along the same line.
To that end, we use the preconditioners $S_1=A_1^2 +I$ and $S_2=A_2^2 +I$, where the generating 
matrices $A_1$ and $A_2$ are defined as above. The following theorem holds.
\begin{theorem}\label{thm:cond-AAp1}
Let the preconditioners be given by 
$$S_1=A_1^2 +I  \qquad \text{and} \qquad S_2=A_2^2 +I,
$$ 
where the generating matrices $A_1$ and $A_2$ are defined by 
(\ref{eqn:PrecA1_3D}) and (\ref{eqn:PrecA2_3D}), respectively.
Then under the above assumptions on the equation coefficients the following condition 
number bounds hold
\begin{equation} \label{eqn:S2_for_caseII}
 \mbox{cond}(S_2^{-1} S)  \leq  \frac{(1+q_D)^{2(d-1)}}{(1-q_D)^{2(d-1)}}, 
\end{equation}
and
\begin{equation} \label{eqn:S1_for_caseII}
 \mbox{cond}(S_1^{-1} S) \leq 
 \frac{(1+q_A)^2(1+q_D)^{2(d-1)}}{(1-q_A)^2(1-q_D)^{2(d-1)}},
\end{equation}
for $d=2,3,$ uniformly in $n$.
\end{theorem}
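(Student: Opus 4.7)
The plan is to lift the spectral equivalence between $A$ and $A_2$ (resp.\ $A_1$) established in Lemma~\ref{lem:prec_4_A} to a spectral equivalence between $S = A^2 + I$ and $S_2 = A_2^2 + I$ (resp.\ $S_1 = A_1^2 + I$). Since the system matrix is built from the square of $A$ plus the identity, the argument proceeds in two steps: first promote the linear Loewner bound $c\,A_2 \le A \le C\,A_2$ to the quadratic bound $c^2 A_2^2 \le A^2 \le C^2 A_2^2$, then add the identity and factor out a common scalar.

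First, starting from $(1-q_D)^{d-1} A_2 \le A \le (1+q_D)^{d-1} A_2$ in \eqref{eqn:PrecA2_3D_bounds}, I set $c = (1-q_D)^{d-1}$ and $C = (1+q_D)^{d-1}$, and promote it to $c^2 A_2^2 \le A^2 \le C^2 A_2^2$, mirroring the scalar monotonicity of squaring on $[0,\infty)$. I expect this to be the main obstacle, because squaring is not operator monotone on the Loewner cone when $A$ and $A_2$ fail to commute; the justification should exploit the explicit Kronecker-sum form of $A_2$ from \eqref{eqn:PrecA2_3D} together with the termwise rank-$3R$ representation of $A$ in \eqref{eqn:3D_operator}, so that the comparison reduces to scalar inequalities on each Kronecker factor.

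Second, once the quadratic bound is in place I would add the identity and use $c \le 1 \le C$ (valid since $q_D \in (0,1)$) to extract the common multiplicative constant:
\begin{equation*}
c^2 (A_2^2 + I) \le c^2 A_2^2 + I \le A^2 + I \le C^2 A_2^2 + I \le C^2 (A_2^2 + I),
\end{equation*}
which gives $c^2 S_2 \le S \le C^2 S_2$, and therefore $\mbox{cond}(S_2^{-1} S) \le C^2/c^2$, recovering \eqref{eqn:S2_for_caseII}.

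For the $S_1$ estimate I would rerun the same two-step scheme, this time starting from \eqref{eqn:PrecA1_3D_bounds}, which carries the additional factors $(1 \pm q_A)$ from the spectral equivalence between each block $A_{\ell,k}$ and the 1D Laplacian. Squaring and adding the identity yields \eqref{eqn:S1_for_caseII} with the expected extra factor $(1+q_A)^2/(1-q_A)^2$ compared with \eqref{eqn:S2_for_caseII}. Overall, the argument runs parallel to the proof of Theorem~\ref{thm:cond-A1pAm1}, with monotonic squaring of SPD matrices playing the role that order reversal under inversion plays there.
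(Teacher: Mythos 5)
Your proposal takes essentially the same route as the paper: its entire proof consists of asserting, directly from Lemma~\ref{lem:prec_4_A}, the squared two-sided bounds $(1-q_A)^2(1-q_D)^{2(d-1)}(A_1^2+I)\le A^2+I\le(1+q_A)^2(1+q_D)^{2(d-1)}(A_1^2+I)$ and $(1-q_D)^{2(d-1)}(A_2^2+I)\le A^2+I\le(1+q_D)^{2(d-1)}(A_2^2+I)$, which is exactly your two steps of squaring \eqref{eqn:PrecA1_3D_bounds}/\eqref{eqn:PrecA2_3D_bounds} and then absorbing $I$ via $c\le 1\le C$. The ``main obstacle'' you flag --- that squaring is not operator monotone for the non-commuting pairs $(A,A_1)$ and $(A,A_2)$ --- is passed over silently in the paper, which states the quadratic bounds as an immediate consequence of the lemma, so your attempt is no less complete than the published argument, although the Kronecker-factorwise justification you gesture at is carried out neither by you nor by the paper.
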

\begin{proof}
Lemma \ref{lem:prec_4_A} ensures the following spectral bounds
\[
 (1-q_A)^2(1-q_D)^{2(d-1)} (A_1^2 +I) \leq   A^2 + I \leq (1+q_A)^2(1+q_D)^{2(d-1)} (A_1^2 +I),
\]
and
\[
  (1-q_D)^{2(d-1)} (A_2^2 +I) \leq   A^2 + I \leq (1+q_D)^{2(d-1)} (A_2^2 +I),
\]
then the result follows.
\end{proof}

Theorem  \ref{thm:cond-AAp1} proves that the condition number estimate (\ref{eqn:S2_for_caseII}) 
for preconditioner $S_2$ provides stronger bound than that in (\ref{eqn:S1_for_caseII}) for $S_1$ in view of the estimate
\[
 \frac{(1+q_A)^2}{(1-q_A)^2} >1.
\]

\subsubsection{Comparison of preconditioners for two alternative formulations }

Now we are able to compare the condition numbers for the preconditioned equations (\ref{eqn:Control}) 
and (\ref{eqn:Control_mod}). The analysis of the right-hand side in 
equation (\ref{eqn:B2_for_caseI}) (in the following denoted by $Q^{d-1}$) for the preconditioner $B_2$ 
leads to the following three cases depending on respective value of $\max$ and $\min$ 
in the numerator and denominator:
\[
 Q^{d-1}= (1+q_D)^{2(d-1)}, \quad Q^{d-1}= \frac{(1+q_D)^{d-1}}{(1-q_D)^{d-1}}, \quad Q^{d-1}= \frac{1}{(1-q_D)^{2(d-1)}}.
\]
This shows that in all cases we may expect the relations
\[
     \mbox{cond}(B_2^{-1} B)    <   \mbox{cond}(S_2^{-1} S).
\]
The similar calculations lead to the estimate
\[
     \mbox{cond}(B_1^{-1} B)    <   \mbox{cond}(S_1^{-1} S).
\]

We conclude that in all cases the PCG iteration for solving the initial equation 
(\ref{eqn:Control}) is expected to converge faster than in the case of modified formulation (\ref{eqn:Control_mod}).
On the other hand, the rank truncated matrix-vector multiplication with the system matrix
$A^2+I$ can be implemented much faster by using the simple Horner scheme for multiplication of matrix
polynomial with a vector, 
$$A^2 u = A(A u),
$$
than that in the case with the system matrix $A + A^{-1}$. Indeed, in the latter case one needs the 
embedded PCG iteration to solve the equation $A y= u$, required to calculate the action 
$A^{-1} u$ at each iteration step. 

 Hence the comparison of numerical complexity for two equivalent formulations 
(\ref{eqn:Control}) and (\ref{eqn:Control_mod})
is not a straightforward task in general since the trade-off 
between the reduced number of global PCG iterations vs. the reduced cost of matrix-vector operations, 
(i.e. the cost of one PCG iteration) mainly depends on the particular problem setting. 

\subsection{Enhancing the PCG iteration }\label{ssec:speedupPCG} 

In this section, we emphasize several issues which can be employed to optimize the convergence of the PCG iteration and to reduce 
the numerical cost of the whole solution process.

\begin{itemize}

\item[I.] With regard to the control of the number of PCG iterations, the optimal balance between the rank-truncation 
error $\varepsilon_{trunc}$ and stopping criteria 
$\varepsilon_{PCG}$ in the PCG iteration is important. In our numerical tests we therefore use the relation 
$\varepsilon_{trunc} \leq  C_0\varepsilon_{PCG} $ with $C_0\in [0.1,0.01]$.

\item[II.] Using the {\it cascading} multigrid \cite{Shaidur:95,BornDeuf:1996} may reduce the iteration number on fine grids. 
The benefits of this approach are the following: 

(a) There is only low extra cost for realization of the multigrid scheme compared with the single-grid one; 

(b) The interpolation operator from coarse to fine grids applies only to 1D data (vectors) since 
the multivariate functions (grid-based solutions) are represented in the rank-structured 
separable form; 

(c) The number of PCG iterations on the finest grid  can be substantially reduced thanks to 
the good initial guess interpolated from coarser grid $\Omega_h$ to finer grid $\Omega_{h/2}$,
\[
 u_h \mapsto u_{h/2} 
\]
by using linear or cubic splines. 

\item[III.] The practical control of the FEM/FDM approximation in the considered control problems is a nontrivial task.
Estimating the convergence rate of the FEM/FDM discretization in terms of the 
mesh-size can be performed on a sequence of grids, thus evaluating the true scale for the stopping criteria
(in order to avoid over-iteration). We assume that the FEM/FDM approximate solution on the grid $\Omega_h$
can be represented for small enough mesh-size $h$ in the form
\begin{equation}\label{eqn:error_decay}
 u_h = \hat{u}_h^\ast + \hat{C}_1 h^\alpha + O(h^{\alpha+1}), \quad \alpha>0,
\end{equation}
where the discrete functions $\hat{u}_h^\ast$ and $\hat{C}_1$ are the traces 
on the grid $\Omega_h$ of the exact solution 
$u^\ast(x)$ and the continuous function $C_1(x)$, respectively. Here the function $C_1(x)$
does not depend on the mesh parameter $h$.
Then we readily obtain the decomposition of the inter-grid error decay  by 
restricting $u_{h/2}$ onto the coarser grid $\Omega_h$, 
\[
 u_h - u_{h/2} = (1-2^{-\alpha}) \hat{C}_1 h^\alpha + O(h^{\alpha+1}).
\]
Now calculating the ratio on two pairs of refined grids leads to the estimate
\[
 \frac{\| u_{2h} - u_{h}\|_2}{\| u_h - u_{h/2}\|_2} \approx 2^\alpha,
\]
which indicates the decay rate of interest, $\alpha>0$, where the norm in the nominator is calculated for 
the traces on the coarsest grid $\Omega_{2h}$, while denominator 
is evaluated on $\Omega_h$. The estimated parameter $\alpha$ can be 
used in the error representation (\ref{eqn:error_decay}).
In our numerical tests we observe the range of exponential $\alpha\in [3/2;2]$.  In most cases we 
arrive at the value $\alpha=2$ indicating the approximation error of the order of $O(h^2)$ for the 
constructed FDM discretization scheme.

\end{itemize}


\section{Numerical Results} \label{sec:numerics}

In this section, we present numerical results for given 2D data. We consider the target operator $A$ 
to be in a (low-)rank structured form as in \eqref{eqn:2D_operator}. First, we are going to solve the illustrative example equation 
\begin{equation}\label{eqn:eq1}
Au = F 
\end{equation}
with respect to $u$ to show that our low-rank PCG solution scheme making use of the algorithm introduced 
in  \cite{HKKS:18} and presented in the Appendix of this paper indeed serves as a proper solver. 
Subsequently, we aim at solving equation \eqref{eqn:Control} in order to solve control problem \eqref{eqn:DiscrOptimal}. 
In order to avoid embedded iterations, we solve the reformulated version of \eqref{eqn:Control},  namely
\begin{gather}
(\gamma A^2 + I)u = AF \label{eqn:eq2}
\end{gather}
for the control $u$. 

\begin{figure}[H]
 \centering
 	
 	\subfigure[Gaussian RHS $F$]{\includegraphics[width=0.32\textwidth]{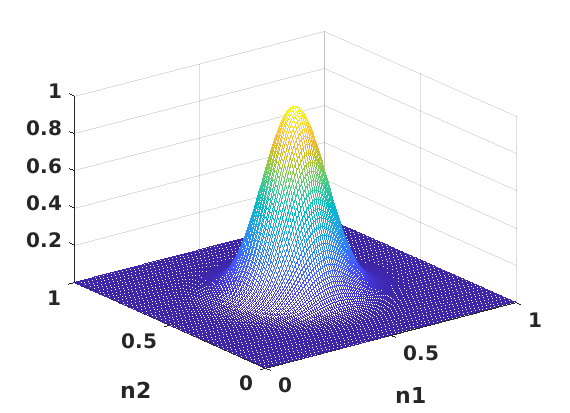}}
 	\subfigure[$AF$ for test 1]{\includegraphics[width=0.32\textwidth]{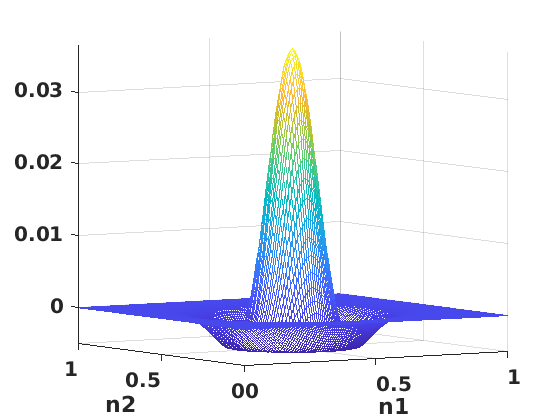}}
 	\subfigure[$AF$ for test 2]{\includegraphics[width=0.32\textwidth]{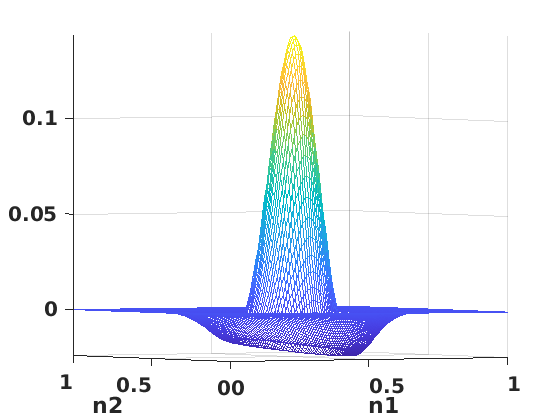}}
 	\caption[Test 2: Lösungen $u$ mit $\alpha = 1$]{Gaussian-type right hand side (RHS) $F$ and 
 	right hand side $AF$ of equations \eqref{eqn:eq1} and \eqref{eqn:eq2}, respectively.}
 	\label{Fig:design2D}
 \end{figure}
For both equations, the optimal design $F$ is given as the rank-1 representation of a Gaussian function, 
see figure \ref{Fig:design2D}.
Ensuring low-rank representations of all involved operators throughout the solution scheme keeps the 
storage complexity comparatively low and enables us to solve equations \eqref{eqn:eq1} and \eqref{eqn:eq2} 
on very fine grids, especially with regard to a possible application to 3D tensor data. This topic is discussed 
in more details at the end of this subsection.  Within the PCG algorithm, we use a 
truncated SVD as rank truncation method and use $\varepsilon_{trunc} = 10^{-8}$ as error threshold for truncation. 
We stop the algorithm whenever the relative residual is small enough, 
that is whenever $\frac{||Res||}{||F||} =  \varepsilon_{PCG} \le 10^{-7}$ holds.

 In the following, we present the results for two tests that differ in the rank-1 separable variable 
 coefficients  $a(x_1,x_2) = \sum_{k = 1}^R \prod_{m=1}^d a_m^k(x_m)$  that are part of the elliptic operator 
 defined in \eqref{eqn:Oper_Rank1}. For the sake of clarity, we choose the coefficients to have rank $R = 3$ 
 throughout our tests, that is they are given by
 \begin{equation}\label{eqn:total_coeff}
   a(x_1,x_2) =\sum_{k = 1}^R a_1^k(x_1)a_2^k(x_2). 
\end{equation}

 In {\it test 1}, we use the coefficients 
 \begin{equation} \label{eqn:coeff_test1}
 a_1^k(x_1) = 1, \qquad a_2^k(x_2) = 1, \qquad k = 1,2,3, 
 \end{equation} 
 resulting in the classic  Laplace operator 
 $$\Delta = R( \Delta_1 \otimes I_2 + I_1 \otimes \Delta_2), \quad R = 3.
 $$
 In {\it test 2}, we choose the coefficient functions to be given by 
 \begin{equation} \label{eqn:coeff_test2}
 \begin{alignedat}{2}
 a_1^1(x_1) &=  x_1 + 2 \ , \qquad && a_2^1(x_2)   = 5x_2^2 +2,  \\
 a_1^2(x_1) &= \sin(x_1)\cos(x_1) + 1 \ , \qquad && a_2^2(x_2)  = 1,  \\
 a_1^3(x_1) &= 1 \ , \qquad && a_2^3(x_2)  = \sin(4\pi x_2) + 2. 
 \end{alignedat}
 \end{equation}
 
 Figure \ref{Fig:total_coeff} provides a visualization for the total multi-dimensional diffusion coefficient \eqref{eqn:total_coeff} 
 that is considered within the numerical examples for \textit{test 2}. 
 This total coefficient $a(x_1,x_2)$ contains the one-dimensional rank-1 coefficients defined in \eqref{eqn:coeff_test2}. 
 
 \begin{figure}[H] 
 \centering
 	\includegraphics[width=0.44\textwidth]{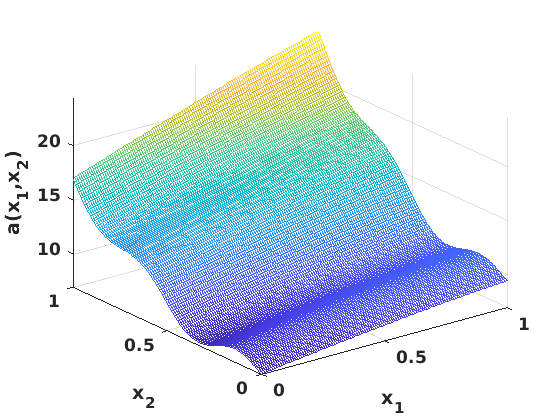}
 	\caption[Test 2: Lösungen $u$ mit $\alpha = 1$]{Total diffusion coefficient $a(x_1,x_2)$ 
 	for \textit{test 2} with single coefficients $a_i^k(x_i), \ {i=1,2,} \ {k = 1,2,3}$ 
 	defined in \eqref{eqn:coeff_test2}. }
 	\label{Fig:total_coeff}
 \end{figure} 
 
 For both test settings and both equations \eqref{eqn:eq1} and \eqref{eqn:eq2}, we present a numerical 
comparison of two different preconditioner  
approaches: When solving \eqref{eqn:eq1}, 
we compare the respective inverse of $A_1$ and $A_2$, that is we 
use $\widetilde{S}_1 = A_1^{-1}$ and $\widetilde{S}_2 = A_2^{-1}$ as preconditioners. 
For equation \eqref{eqn:eq2} we compare the preconditioners $S_1 = (A_1^2 + I)^{-1}$ 
and $S_2 = (A_2^2 + I)^{-1}$, that were introduced and discussed in section \ref{ssec:Precond} 
and where $A_1$ and $A_2$ are respectively defined as in 
\eqref{eqn:PrecA1_2D} and \eqref{eqn:PrecA2_2D} 
via
\begin{equation*} 
 A_1= a_{1}^{0} \Delta_1 \otimes I_2  + a_{2}^{0} I_1 \otimes \Delta_2,
\end{equation*}
\begin{equation*} 
 A_2= A_1^0 \otimes I_2 + I_1 \otimes A_{2}^{0},
\end{equation*}
where
\[
 A_1^0 =  d_{2}^0  A_{1}, \quad
 A_2^0 =  d_{1}^0  A_{2}, \quad 
\]
anisotropy factors $a_\ell^0$ given by \eqref{eqn:ani_factors} and weights $d_{\ell}$ 
given by \eqref{eqn:weights}. \\
All preconditioning operators $S_1$ and $S_2$, $\ \widetilde{S}_1, $ and $\widetilde{S}_2$ 
are represented 
in low-rank structure where we choose the fixed rank parameter $R_{precond} = 10$ throughout all tests.

\subsection{Solving $\mathbf{Au = F}$ and  $\mathbf{(A^2+I)u = AF}$} \label{subsec:Num_Control_Mod}

In this section, we present the
 numerical results for 
the solution of equations \eqref{eqn:eq1} and \eqref{eqn:eq2} for an increasing number of grid points 
and the parameter $\gamma = 1$. The coarsest $n \times n$ grid 
consists of $n^2 = (2^5 -1)^2 = 31^2 = 961 $ grid points, whereas the finest grid consists 
of $n^2 = (2^{12} - 1)^2 = 4095^2 = 16.769.025$ grid points. 

First, we want to verify the usage of our low-rank pcg solver by comparing the time needed to 
solve equation \eqref{eqn:eq1}, $Au = F,$ by the low-rank pcg scheme to the time the direct 
backslash Matlab solver needs, that makes use of a Cholesky factorization. Test setting 1 is used, 
resulting in the classic Laplace operator. The Matlab solver is applied to the direct finite difference 
discretization of equation \eqref{eqn:eq1}, that results in huge system matrices. It is worth to note 
that only by making use of sparse matrix structures it is possible to solve such huge systems 
up to a grid size of $2047^2 = 4190209$ grid points with the direct Matlab solver. 
Table \ref{tab:MatlabSolver} shows the results of the comparison of the aforementioned solvers. 
It can be seen that for growing grid sizes, the rank-structured pcg solver clearly outperforms the 
direct Matlab solver with respect to time needed to solve \eqref{eqn:eq1}. What is more, the 
low-rank pcg scheme ensures a consistent grid independent low rank of the solution operator. 
We therefore underline that ensuring low-rank structures of all involved operators within the pcg scheme 
makes a solution of 
\eqref{eqn:eq1} on very fine grids possible. It was not possible to solve the problem for grid sizes 
exceeding $2047^2$ grid points with the direct Matlab solver on the above mentioned machine, while our 
low-rank pcg scheme could handle the respective large grid size of $4095^2$ grid points. 
For more details on the required storage complexity that is linked to the solution rank, 
see the discussion at the end of this subsection.

In the following, we are going to analyze the performance of the low-rank pcg solution scheme for solving 
the problem-specific control equation \eqref{eqn:eq2}, that is $$(\gamma A^2 + I)u = AF. $$
Table \ref{tab:test1MG2} represents the results for solving equation \eqref{eqn:eq2} when using the 
coefficients defined for test 1, given by \eqref{eqn:coeff_test1} and resulting in the operator $A$ 
to be the classic Laplacian. Note that since all coefficients are chosen to be 1 in this case, 
both preconditioning approaches lead to the same 
preconditioning operator, that is $S_1 = S_2$ since in this case it 
holds $a_1^0 = a_1^0 = d_1^0 = d_2^0 = 1.$ Therefore, the results for both preconditioner styles 
are the same and there is no respective distinction in the table. 

\begin{table} 
\begin{center}
 \begin{tabular}
[c]{c||c|c||c}%
  grid   size   &  time pcg &   sol. rank  &  time   DM \\
  \hline
 $127^2$    & 0.0106 & 4  & 0,0375       \\
     \hline 
 $255^2$	& 0.0051	& 4 & 0,1271 		\\
 	\hline
 $511^2$	& 0.0152	 & 4 & 0,5363		\\
 	\hline
 $1023^2$	& 0.0250	 & 4 & 2,0991		\\
 	\hline
 $2047^2$	& 0.0385	 & 4 & 9,4740		\\
 	\hline
 	$4095^2$	& 0.1069	 & 4 & --		\\
  \end{tabular}
 \caption{Total times in seconds needed by the rank-structured scheme and the direct Matlab (DM) 
 solver to solve \eqref{eqn:eq1} with different numbers of grid points. 
 The setting for test 1 and preconditioner $\widetilde{S_1}$ is used.}
 \label{tab:MatlabSolver}
  \end{center}
  \end{table}
Table \ref{tab:test2MG2} represents the results for solving equation \eqref{eqn:eq2} using the coefficients 
defined for test 2, defined in \eqref{eqn:coeff_test2}.
We show the differences in the results 
for different kinds of preconditioners $S_1$ and $S_2$ as described 
in the introductory part of this chapter. As theoretically shown in subsection \ref{ssec:Precond}, 
Theorem \ref{thm:cond-AAp1}, 
we note that the numerical results confirm the superiority of a preconditioning operator in the style 
of $A_2 $ in contrast to using $ A_1$ since we notice a lower number of iterations needed to 
solve the problem, which is also associated to a faster convergence of the PCG algorithm. 

Tables \ref{tab:test1MG2} and \ref{tab:test2MG2} both verify that the number of iterations 
needed by the pcg scheme to solve equation \eqref{eqn:eq2} does not (test 1) or only slightly (test 2) 
changes when a mesh refinement is done, implying that a successive performance of the low-rank pcg solver 
is ensured independently of the univariate grid size. 
\begin{table}[h] 
\begin{center}
\begin{tabular}[h]{c|c|c|c}
grid size & \# iter & time pcg (in sec.) &  sol. rank \\
\hline
$31^2$	& 1		& 0.0037		& 4	\\
$63^2$ 	& 1   	& 0.0047   	& 4   \\
$127^2$ 	& 1   	& 0.0092    	& 4   \\
$255^2$ 	& 1   	& 0.0055  	& 4 \\
$511^2$ 	& 1   	& 0.0113  	& 4   \\
$1023^2$ 	& 1   	& 0.0309   	& 4   \\
$2047^2$ 	& 1   	& 0.0792    	& 4   \\
$4095^2$	& 1		& 0.3479		& 4	\\
\end{tabular}
\caption{Results for solving equation \eqref{eqn:eq2} when using coefficients for test 1 and $\gamma = 1$. 
A cascadic mesh refinement is done.}
 \label{tab:test1MG2}
\end{center}
\end{table}
Another important observation is that the rank of the solution matrix $u$ does not (test 1) or only observable
slightly (test 2) change over different grid sizes, analogously to the case when solving test 
equation \eqref{eqn:eq1} (see table \ref{tab:MatlabSolver}). That fact ensures that by using 
low-rank structures within the solution scheme, equations \eqref{eqn:eq1} and \eqref{eqn:eq2} 
can be solved for a huge number of grid points. Working with the full-format operator matrix $A$ 
would require the storage of a $n^2 \times n^2$ matrix, which is only tractable when exploiting 
sparse structures. As previously shown in table \ref{tab:MatlabSolver}, this storage requirement 
cannot be handled by the used laptop (16GB RAM) when a grid size of $2047^2$ grid points is exceeded. 
However, when using a low-rank decomposition of the respective operator, the storage can be reduced 
dramatically, resulting in a storage cost of the order $O(dRn),$ where in our tests $d=2$ and $R$ 
denotes the rank of the solution $u$ when solving \eqref{eqn:eq1} or \eqref{eqn:eq2}. 
The aforementioned fact is the reason why it is important to ensure low-rank structures throughout the algorithm. 
The low-rank decomposition can be computed by using svd (in the 2D case) or the canonical tensor format (in the 3D case). 
For a more detailed discussion on this topic, also see the recent paper of the authors \cite{SchmittKh2Sch:20}.
 Analyzing the time the pcg scheme needs in order to solve equation \eqref{eqn:eq2} 
for different univariate grid sizes $n$, we conclude that a nearly linear complexity scaling can be 
observed for both \textit{test 1} and \textit{test 2}. 
Figure \ref{Fig:control2D} represents the control solution $u$ of the problem-specific control 
equation \eqref{eqn:eq2} for two different test settings when using preconditioner style $S_2$, 
regularization parameter $\gamma = 1$ and using a grid of size $n_1 = n_2 = 255$ resulting in a total 
number of $n^2 = 255^2$ grid points. 

\begin{table}[h] 
\begin{center}
\begin{tabular}[h]{c|c c|c c|c c|c c}
grid size & \multicolumn{2}{c|}{\# iter} & \multicolumn{2}{c|}{time pcg (in sec.)} & \multicolumn{2}{c|}{time per iter} & \multicolumn{2}{c}{sol. rank} \\
 & $S_1$ & $S_2$ & $S_1$ & $S_2$ & $S_1$ & $S_2$ & $S_1$   & $S_2$\\
\hline
$31^2$	& 8 & 5		& 0.0328 &	0.0116	& 0.0041		& 0.0023	& 12 & 10	\\
$63^2$ 	& 9 & 5  	& 0.0581 &   0.0204 	& 0.0065 	& 0.0041		& 10 & 10 \\
$127^2$	& 9 & 5 		& 0.0854 &   0.0513 	& 0.0095 	& 0.0103		& 11 & 9 \\
$255^2$ 	& 9 & 5 		& 0.2150 &   0.0937 	& 0.0239 	& 0.0187		& 10 & 10\\
$511^2$	& 9 & 5 		& 0.3159 &   0.1719 	& 0.0351 	& 0.0344		& 10 & 10\\
$1023^2$  	& 9 & 5 		& 0.5551 &   0.2763 	& 0.0617 	& 0.0553		& 11 & 10 \\
$2047^2$  	& 9 & 5 		& 1.0924 &  0.7116	& 0.1214 	& 0.1423		& 10 & 10 \\
$4095^2$		& 9	& 5		& 2.7344 &	2.3951 & 0.3038	& 0.4790	& 10 & 10 \\
\end{tabular}
\caption{Results for solving equation \eqref{eqn:eq2} when using coefficients for test 2 and $\gamma = 1$. 
Two different preconditioner styles $S_1$ and $S_2$ are considered and a cascadic mesh refinement is done.}
\label{tab:test2MG2}
\end{center} 
\end{table} 
\vspace{-0.2cm}
\begin{figure}[H] 
 \centering
 	\subfigure[Solution $u$ for test 1]{\includegraphics[width=0.42\textwidth]{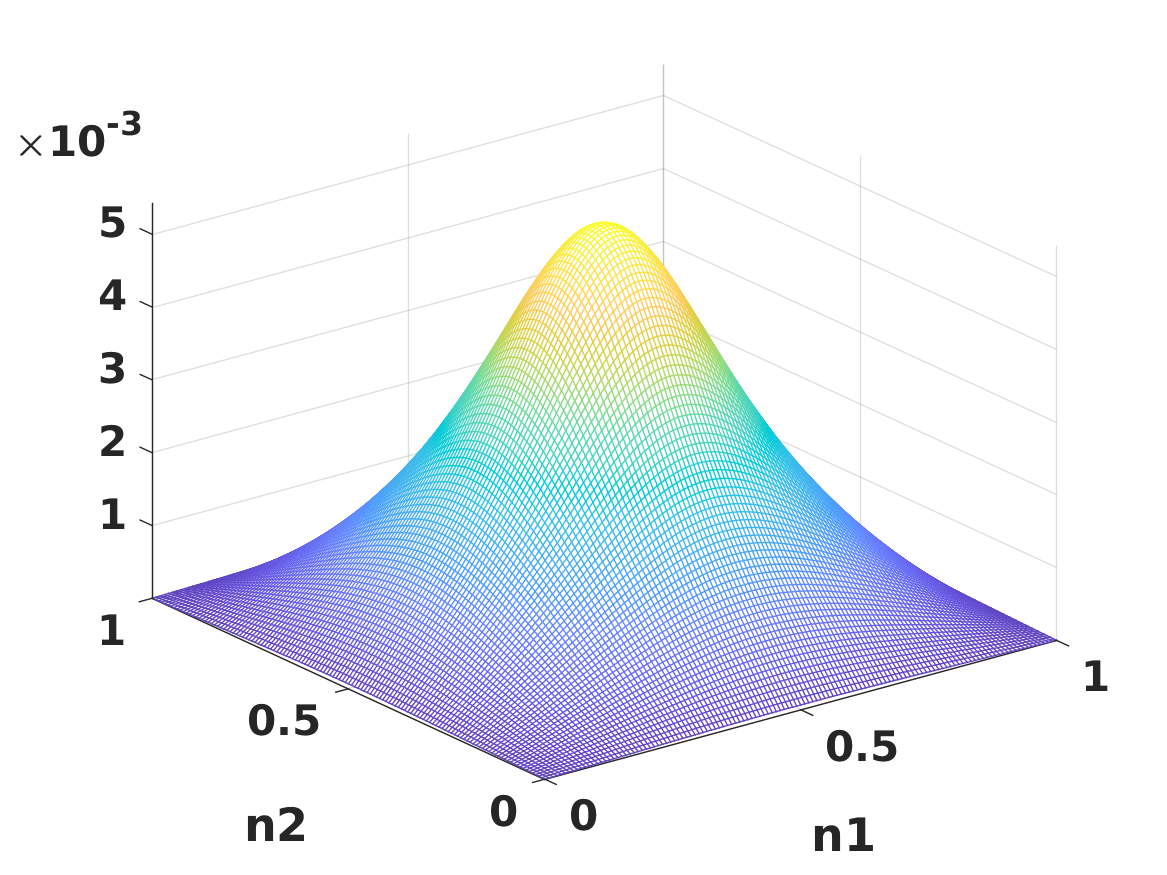}}
 	\subfigure[Solution $u$ for test 2]{\includegraphics[width=0.42\textwidth]{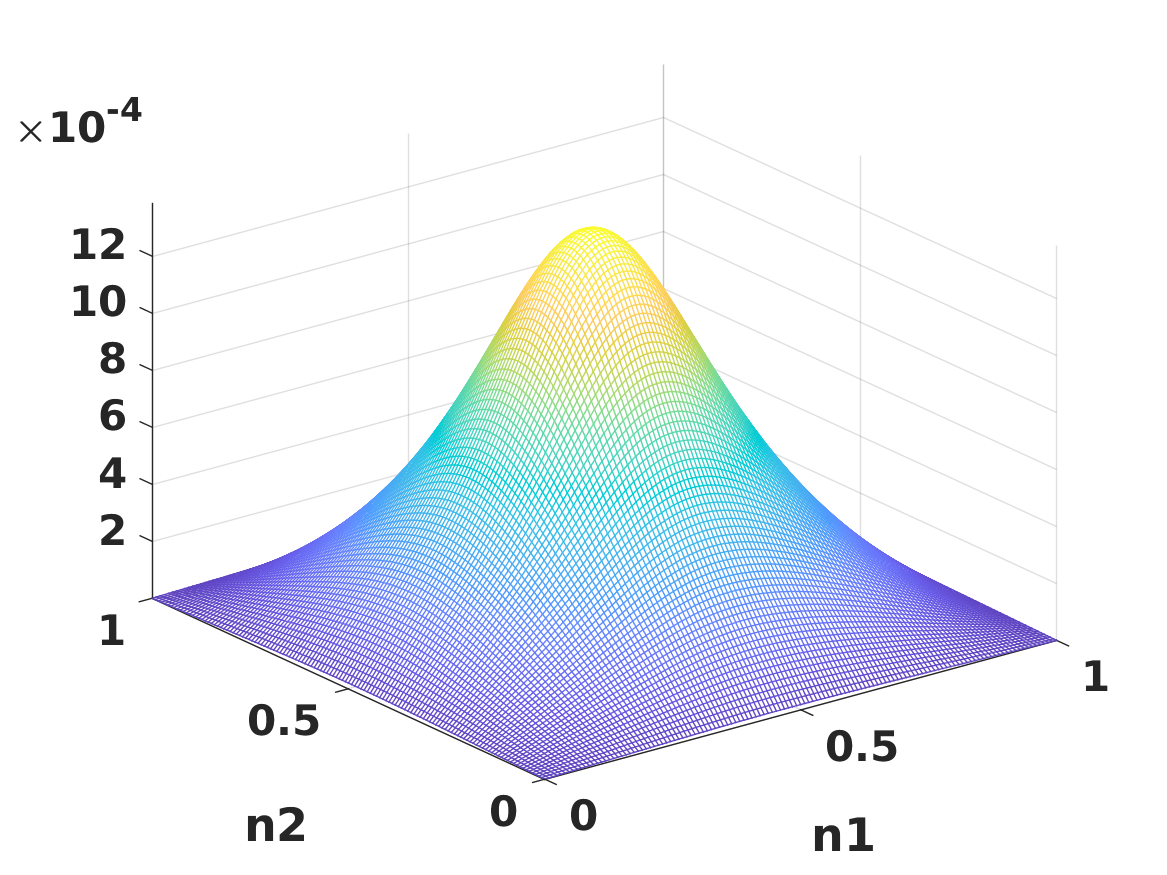}}
 	\caption[Test 2: Lösungen $u$ mit $\alpha = 1$]{Solution $u$ of problem-specific control 
 	equation \eqref{eqn:eq2} for different test settings 1 (left) and 2 (right), 
 	preconditioner $S_2$, $\gamma = 1$, and  $n^2 = 255^2$  grid points.}
 	\label{Fig:control2D}
 \end{figure} 
\begin{remark} In general, for the solution of optimal control problems of type \eqref{eqn:cost_func}, 
the regularization parameter $\gamma$ is chosen to be very small, $\gamma \ll 1$, putting less weight on 
the regularization part of the objective function. However, our solution approach to the respective 
optimal control problem requires the solution of the control equation \eqref{eqn:eq2}. For this equation, 
numerical tests indicate that the smaller parameter $\gamma$ is chosen, the smaller the computational effort 
gets in order to solve equation \eqref{eqn:eq2}. This is due to the fact that for $\gamma \rightarrow 0$, 
equation \eqref{eqn:eq2} approaches $$(N + I)u = F,$$
where $ N := \gamma A^2$ and   $N \stackrel{\gamma \rightarrow 0 }\longrightarrow 0.$
\end{remark}

\subsection{Tests on the discretization convergence rate}

In this subsection, we aim at identifying a explicit convergence rate for the FDM discretization scheme 
we use in our numerical test. As discussed in subsection \ref{ssec:speedupPCG}, we want to estimate the 
rate in terms of the mesh-size, where accordingly to the statements in \ref{ssec:speedupPCG}, 
we calculate the ratio on two pairs of solutions $u$ on refined grids $\Omega_{2h}, \ \Omega_{h},$ and $\Omega_{h/2}$ as 

\begin{equation}
c_h := \frac{\| u_{2h} - u_{h}\|_2}{\| u_h - u_{h/2}\|_2},
\end{equation}
where we expect $c_h \approx 2^\alpha$.

Note that $u$ denotes the solution of equation \eqref{eqn:eq2} when $\gamma = 1$.
For both \textit{test 1} and \textit{test 2}, we compute the ratio $c_h$ numerically for grid sizes of size $n^2 = (2^L-1)^2$ for $L = 6,\dots,11$, implying $h = 1/n$ for $\Omega = [0,1]^2$. 
For the sake of comparison, solutions $u_{2h}$ and $u_{h/2}$ are interpolated onto the grid specified by 
the discrete solution $u_h$ when computing the ratio $c_h$.
Table \ref{tab:convrate} displays the intergrid error ratio for the solutions $u_{2h}, u_h, u_{h/2}$ on refined meshes. 

The numerical results indicate a decay rate of $\alpha = 2 > 0,$ leading to the estimate $c_h \approx 2^2$, 
 which justifies the FDM approximation error of the order of $O(h^2)$.

\begin{table}[H] 
\begin{center}
\begin{tabular}[h]{c|c c c c c c} 
grid size $n^2$ & $63^2$ & $127^2$ &  $255^2$ &		$511^2$ & $1023^2$ & $2047^2$		 \\
\hline
\hline
$c_h$ test 2 $S_2$ & 3,98 & 4,00 & 4,00 & 3,99 & 3,98 & 3,93     \\
$c_h$  test 1 & 4,02 & 4,00 & 4,00 & 3,99 & 4,00 & 3,99
\end{tabular}
\caption{Intergrid error ratio $c_h$ for both \textit{test 1} and \textit{test 2}, where in \textit{test 2} preconditioner style $S_2$ is used.}
\label{tab:convrate}
\end{center}
\end{table}

\subsection{Effects of a cascadic multigrid approach} \label{ssec:MG}

In this subsection, we want to investigate the impact of using a cascading multigrid scheme within our numerical pcg 
solution scheme. Therefore, we use the 1D solution data from grid $\Omega_{h}$ in order to compute an initial 
guess for the refined grid $\Omega_{h/2}$, 
which is done by using a numerically cheap one-dimensional cubic spline prolongation operator
$$p: \R^{\Omega_{h}} \rightarrow \R^{\Omega_{h/2}}$$
that defines a polynomial of third degree.

Table \ref{tab:MG} shows the impact of using the cascading multigrid approach when solving equation \eqref{eqn:eq2} 
within the setting of \textit{test 2} when $\gamma = 1$.
Analyzing the results and comparing them to the results presented in table \ref{tab:test2MG2}, we notice that the number 
of needed iterations in order to solve equation \eqref{eqn:eq2} decreases for increasing grid sizes.\\
On the other hand, concerning the computational complexity, we note that for the finest grid consisting of $4095^2$ grid points, 
the accumulated computational complexity for preconditioner $S_2$ approximately matches, but for preconditioner $S_1$ 
exceeds the time the algorithm needs to solve equation \eqref{eqn:eq2} without using a multigrid approach, 
see the respective \textit{time pcg (in sec.)} in table \ref{tab:test2MG2}. 
The accumulated complexity takes into account the time the algorithm needs in order to solve \eqref{eqn:eq2} 
on all previous grid sizes, leading to the initial guess that is subsequently used as inital guess on the finest grid. 
The dates of table \ref{tab:MG} also show that the time needed for a single iteration increases significantly for fine grids. 
This effect is discussed in details whithin the next section. 

\begin{table}[H] 
\begin{center}
\begin{tabular}[h]{c|c c|c c|c c|c c}
grid size & \multicolumn{2}{c|}{\# iter} & \multicolumn{2}{c|}{time pcg (in sec.)}  & \multicolumn{2}{c|}{time per iter} & \multicolumn{2}{c}{sol. rank}  \\
 & $S_1$ & $S_2$ &  $S_1$ & $S_2$  & $S_1$ & $S_2$ & $S_1$   & $S_2$ \\
\hline
$31^2$		& 8 & 5		& 0.0842  &	  0.0150		& 0.0404			& 0.0030		& 12 & 10	\\
$63^2$ 		& 5 & 3  	& 0.0518  &   0.0232 		& 0.0104 	& 0.0077		& 10 & 10	\\
$127^2$		& 5 & 3 		& 0.0810  &   0.0658		 	& 0.0162 	& 0.0219		& 10 & 10	 \\
$255^2$ 		& 4 & 2 		& 0.2010  &   0.0848		 	& 0.0503 	& 0.0424		& 10 & 10	\\
$511^2$		& 4 & 2 		& 0.3667  &   0.1699		 	& 0.0917		& 0.0850		& 9 & 9		\\
$1023^2$  	& 4 & 2 		& 0.7122  &   0.2902		 	& 0.1781 	& 0.1451		& 9 & 9 		\\
$2047^2$  	& 4 & 2 		& 1.5562  &  0.6638			& 0.3890 	& 0.3319		& 9 & 9		 \\
$4095^2$		& 3	& 1		& 2.0720 &	 1.1479			& 0.6907 	& 1.1479		& 9 & 9 		 \\ \cline{1-5} 
			 \multicolumn{3}{r|}{\multirow[t]{2}{2cm}{\textit{\footnotesize accumulated time}}}	&	\multirow[c]{2}{*}{5.3638}	&   		\multirow[c]{2}{*}{2.4606}					&	\\			
			  \multicolumn{3}{c|}{ }	&		  &				&	 \multicolumn{4}{c}{ }			\\	
\end{tabular} 
\caption{Results for solving equation \eqref{eqn:eq2} when using coefficients for test 2, making use of a cascading multigrid scheme and choosing $\gamma = 1$. Two different preconditioner styles $S_1$ and $S_2$ are considered and a cascadic mesh refinement is done.}
\label{tab:MG}
\end{center} 
\end{table} 

\subsection{Rank propagation}

In the preceding section, more precisely when analyzing table \ref{tab:MG} and comparing it to the 
respective table \ref{tab:test2MG2} that represents the results when no multigrid approach is used, 
we notice a remarkable effect that arises when making use of a multigrid scheme in order to solve 
the problem-specific equation \eqref{eqn:eq2}: While on the one hand, the number of iterations can 
be decreased to a minimum for fine grids, on the other hand, the time in seconds per iteration 
for finer grids when using a multigrid scheme increases 
and clearly exceeds the time per iteration when not using a multigrid scheme
introduced in subsection  \ref{ssec:MG} in order to solve \eqref{eqn:eq2}. 
 Therefore, we notice that 
the cumulative computational complexity of the multigrid scheme, 
obtained by summation the numerical costs over all grid levels, remains merely the same as for the unigrid solver
applied on the finest grid.  

This effect is due to the propagation of the rank of the initial guess operator (vector) $\Xo$ used for 
the pcg scheme that is represented in the appendix.  Please note that we refer to the number of 
columns of the respective operator by its \textit{rank}.  For every tensor-vector multiplication within 
the algorithm, the input tensor  of K-rank $R$ and vector of \cn rank $S$ multiply as 
defined in the multiplication 
scheme \eqref{eqn:multischeme}, resulting in rank $RS$ for any tensor-vector product $Ax$. Furthermore, 
in our particular tests, we are interested in the solution of \eqref{eqn:eq2} that additionally requires 
the computation of a more advanced tensor-vector product $(A^2+ I )u$,  that is computed by making use 
of the Horner scheme, implying a successive multiplication structure $A(Au)$. This method allows for a 
truncation-during-multiplication approach, where the rank $RS$ originating from the first 
multiplication $Au$ is immediately reduced to a rank $r_\varepsilon \ll RS$ with respect to a 
truncation treshold $\varepsilon_{trunc}$. Hence, also taking into account special concatenation 
structures that are used for the numerical tests, the rank of the resulting complete tensor-vector
product $(A^2 + I)u$ can be estimated by $(4Rr_\varepsilon + S)$.  

 As already described in the introduction and the previous paragraph, we make use of a rank reduction 
 procedure $\mathtt{trunc}$ that reduces the rank of the current operator to a given $\varepsilon_{trunc}$-threshold 
 within the multiplication scheme and after every step within {Algorithm 1} in order to ensure 
 trackability throughout the pcg scheme.  In particular, in order to truncate the rank of the aforementioned product, we use a reduced SVD as truncation 
 procedure $\mathtt{trunc}$ that due to considering a full matrix and an accuracy truncation 
 threshold $\varepsilon_{trunc}$ has a computational complexity of order 
  $O(\min(n^2(4Rr_\varepsilon + S), n(4Rr_\varepsilon+S)^2))$ 
 strongly  depending on the rank parameters $R$ and $S$ and therefore representing the most 
 time-consuming part within  the algorithm. 
 
In order to understand the effect explained at the beginning of this subsection, we track the operator rank 
throughout the algorithm for test setting 2, preconditioner $S_2$ and an example grid size 
of  $n^2 = 4095^2$   grid points when a multigrid scheme is used (MG) and when not (noMG). 
In the setting when no multigrid scheme is used, the nullvector $\Xo_{noMG} \in \R^{n\times1}$ 
serves as initial guess  that implies rank $S^{\Xo}_{noMG} = 1.$
When making use of a multigrid scheme, the pcg algorithm computes solutions to \eqref{eqn:eq2} 
starting on the coarsest grid of size $n^2 = 31^2$ up to the finest grid of size $n^2 = 4095^2$, where 
the respective initial guess is computed as described in subsection \ref{ssec:MG}, that is based on 
the solution of the respective previous grid. In our test, that leads to an initial guess 
 $\Xo_{MG} \in \R^{n \times 10}$ that implies rank $S^{\Xo}_{MG} = 10$. Note that in both settings, 
 for the K-rank of the operator $A$ it holds $R_{MG} = R_{noMG} = 3.$
 
Figure \ref{Fig:rankpropagation} represents the rank propagation throughout every step of the pcg 
scheme based on the respective initial guess rank $S^{\Xo}_{noMG}$ and $S^{\Xo}_{MG}$ for test 2, 
preconditioner $S_2$ and grid size $n^2 = 4095^2$. Note that the multigrid scheme requires less iterations 
resulting in less single pcg steps. It is clearly observable that the rank spectrum when using a multigrid 
scheme significantly exceeds the rank spectrum when no multigrid scheme is used, which then also indicates 
a higher computational cost for the respective rank truncation operations by reduced SVD.

\begin{figure}[H] 
 \centering
 	\includegraphics[width=0.44\textwidth]{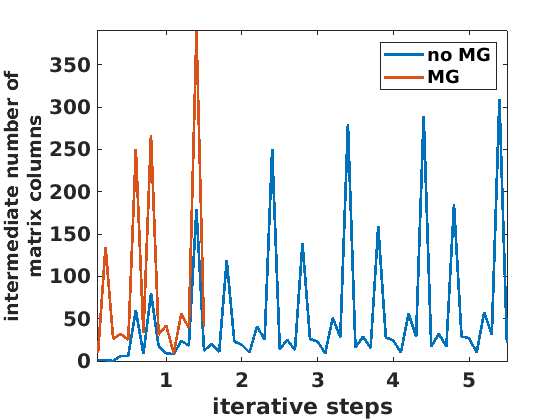}
 	\caption[Test 2: Lösungen $u$ mit $\alpha = 1$]{Rank propagation within the pcg scheme 
 	when using a multigrid 
 	scheme (red line) and when using no multigrid scheme (blue line) for test 2, 
 	preconditioner $S_2$ and grid size  $n^2 = 4095^2$.  }
 	\label{Fig:rankpropagation}
 \end{figure} 

Taking into account the results represented in tables \ref{tab:test2MG2}, \ref{tab:MG}, and 
figure \ref{Fig:rankpropagation}, we can summarize that ensuring low ranks throughout all steps 
of the algorithm, which in turn regulates the computational complexity of the truncated SVD, is the 
most importing aspect in order to keep the total computational cost for solving equation \eqref{eqn:eq2} low. 
Furthermore, the characteristic of the multiplication scheme implies that low ranks are most likely to be 
kept low within the algorithm when the rank of the initial guess, $S^{\Xo}$, is low.
 A comparison of tables \ref{tab:test2MG2} and \ref{tab:MG}  indicates that the accumulated time 
for the multigrid iteration on finest grid is merely the same as the CPU time of the unigrid PCG iteration 
performed on the same grid though the MG algorithm converges in only one iteration.  \\
To conclude, we state that the advantages of ensuring a low rank of the initial guess $\Xo$ and 
subsequently staying on a low rank manifold clearly outweigh the advantages of starting with a 
good initial guess within a  multigrid scheme. It is worth to note that in our numerical tests, 
we also observed this effect when choosing smaller initial guess ranks $1 < S_{MG} \ll 10$.

\section{Conclusion} \label{sec:Conclusions}

In this paper, we introduce a tensor numerical method for the efficient solution of optimal control problems  
constrained by elliptic operators in $\mathbb{R}^d$, $d=2,3$, with variable coefficients, 
which can be represented in the low rank separable form. The equation for the control function is 
discretized on large 
$n^{\otimes d}$ grids and then solved by a PCG iteration with adaptive rank truncation. 
We construct and analyze the family of spectrally equivalent preconditioners by using tensor decomposition 
of matrix valued functions of the
anisotropic Laplacian, which then are diagonalized in the Fourier basis. 

 Our numerical tests confirm that the discussed low rank PCG scheme serves as proper solver 
for the introduced problem class, 
as it outperforms considerably the direct Matlab solver especially on large grids. 
Furthermore, our numerical tests on the solution of the 
problem-specific control equation in 2D setting validate a nearly linear complexity scaling in the  
grid size $n^2$, where $n\times n$ grids of 
size $n^2 = 31^2$ up to $n^2 = 4095$ are considered, while the number of iterations remains also stable 
throughout all grid sizes. 
A remarkable effect is that the $K$-rank $R$ of the solution operator remains unaffected small 
throughout all grid sizes, ensuring a small 
storage complexity of order $O(dRn)$ compared to the storage complexity that would be required 
in order to store the full solution 
operator of size  $n^2 \times n^2 $. Numerical investigations on the discretization convergence 
rate justify a FDM 
approximation error of order $O(h^2)$ for our tests. What is more, all aforementioned results hold 
for both types of 
spectrally equivalent preconditioners that have been introduced in the course of the paper. 
However, the numerical tests validate the 
theoretical findings, stating that the initially defined operator-based preconditioner $S_2$ 
outperforms  the anisotropic 
Laplacian-type preconditioner $S_1$.

 Last, we do an analysis on the integration of a cascadic multigrid method in our PCG scheme. 
 The survey shows that integrating 
 a multigrid scheme does not improve the computational complexity due to increasing ranks of 
 the involved operators, however, 
 the number of iterations can be efficiently decreased for increasing grid sizes. 
 All in all, the results put stress on the 
 importance of ensuring a low-rank structure of all involved quantities throughout the algorithm.

 Finally, we notice that our approach can be generalized to the case of optimal control problems that contain fractional operators in constraints, however this problem class will
be considered elsewhere.

\section*{Appendix}
 We present the PCG iterative algorithm performed in a low-rank canonical format with the adaptive rank truncation 
for solving the linear system of equations for control function in $\mathbb{R}^3$, where the system 
matrix is given in low Kronecker rank format.   
As the rank truncation procedure,
in our implementation we apply the reduced SVD algorithm since we work in a two-dimensional room. The algorithm has originally been introduced in \cite{HKKS:18}.

\begin{algorithm}[H]
	\caption{Preconditioned CG method in low-rank format} \label{alg:pcg}
	\begin{algorithmic}[1]
		\Require{Rank truncation procedure $\mathtt{trunc}$,  procedure $\mathtt{matvec}$ in
		the  low $K$-rank format,  preconditioner in low-rank format
		$\mathtt{precond}$, right-hand side tensor $\mathbf{B}$, initial guess $\mathbf{X}^{(0)}$, rank tolerance parameter $\varepsilon$,
		maximal iteration number $k_{\max}$}
			\State $\mathbf{R}^{(0)} \leftarrow \mathbf{B} - \mathtt{matvec}(\mathbf{X}^{(0)})$
			\State $\mathbf{Z}^{(0)} \leftarrow \mathtt{precond}(\mathbf{R}^{(0)})$
			\State $\mathbf{Z}^{(0)} \leftarrow \mathtt{trunc}(\mathbf{Z}^{(0)},\varepsilon)$
			\State $\mathbf{P}^{(0)} \leftarrow (\mathbf{Z}^{(0)})$
			\State $k \leftarrow 0$
			\Repeat
				\State $\mathbf{S}^{(k)} \leftarrow
				\mathtt{matvec}(\mathbf{P}^{(k)})$
				\State {\color{red}$\mathbf{S}^{(k)} \leftarrow
				\mathtt{trunc}(\mathbf{S}^{(k)},\varepsilon)$}
				\State $\alpha_k \leftarrow
				\frac{\dotprod{\mathbf{R}^{(k)}}{\mathbf{Z}^{(k)}}}
				{\dotprod{\mathbf{P}^{(k)}}{\mathbf{S}^{(k)}}}$
				\State $\mathbf{X}^{(k+1)} \leftarrow
				\mathbf{X}^{(k)} + \alpha_k \mathbf{P}^{(k)}$
				\State {\color{red}$\mathbf{X}^{(k+1)} \leftarrow
				\mathtt{trunc}(\mathbf{X}^{(k+1)},\varepsilon)$}
				\State $\mathbf{R}^{(k+1)} \leftarrow
				\mathbf{R}^{(k)} - \alpha_k \mathbf{S}^{(k)}$
				\State {\color{red}$\mathbf{R}^{(k+1)} \leftarrow
				\mathtt{trunc}(\mathbf{R}^{(k+1)},\varepsilon)$}
				\If {$\mathbf{R}^{(k+1)}$ is sufficiently small}
					\State \Return $\mathbf{X}^{(k+1)}$
					\State \textbf{break}
				\EndIf
				\State $\mathbf{Z}^{(k+1)} \leftarrow
				\mathtt{precond}(\mathbf{R}^{(k+1)})$
				\State {\color{red}$\mathbf{Z}^{(k+1)} \leftarrow
				\mathtt{trunc}(\mathbf{Z}^{(k+1)},\varepsilon)$}
				\State $\beta_k \leftarrow
				\frac{\dotprod{\mathbf{R}^{(k+1)}}{\mathbf{Z}^{(k+1)}}}
				{\dotprod{\mathbf{Z}^{(k)}}{\mathbf{R}^{(k)}}}$
				\State $\mathbf{P}^{(k+1)} \leftarrow
				\mathbf{Z}^{(k+1)} + \beta_k \mathbf{P}^{(k)}$
				\State {\color{red} $\mathbf{P}^{(k+1)} \leftarrow
				\mathtt{trunc}(\mathbf{P}^{(k+1)},\varepsilon)$}
				\State $k \leftarrow k+1$
			\Until{$k = k_{\max}$}
			\Ensure{Solution $\mathbf{X}$ of $\mathtt{matvec}(\mathbf{X})=\mathbf{B}$}
	\end{algorithmic}
\end{algorithm}

\section*{Acknowledgment}
This research has been supported by the German Research Foundation (DFG) within 
the \textit{Research Training Group 2126: Algorithmic Optimization}, Department of Mathematics, Trier University, Germany.

%
%

\begin{footnotesize}

\end{footnotesize}

\end{document}